\numberwithin{equation}{section}
\newtheorem{theorem}{Theorem}[section]
\newtheorem{lemma}{Lemma}[section]
\newtheorem{proposition}{Proposition}[section]
\newtheorem{remark}{Remark}
\newcommand{\blue}{\textcolor{blue}}
\newcommand{\abs}[1]{\left| #1\right|}
\newcommand{\calA}{\mathcal{A}}
\newcommand{\calL}{\mathcal{L}}
\newcommand{\calP}{\mathcal{P}}
\newcommand{\calZ}{\mathcal{Z}}
\newcommand{\bbA}{\mathbb{A}}
\newcommand{\bbE}{\mathbb{E}}
\newcommand{\bbL}{\mathbb{L}}
\newcommand{\bbN}{\mathbb{N}}
\newcommand{\bbP}{\mathbb{P}}
\newcommand{\bbR}{\mathbb{R}}
\newcommand{\bbZ}{\mathbb{Z}}
\newcommand{\sfg}{\mathsf g}
\newcommand{\ua}{\underline{a}}
\newcommand{\uh}{\underline{h}}
\newcommand{\ur}{\underline{r}}
\newcommand{\uv}{\underline{v}}
\newcommand{\uu}{\underline{u}}
\newcommand{\ux}{\underline{x}}
\newcommand{\uy}{\underline{y}}
\newcommand{\ueta}{\underline{\eta}}
\newcommand{\unu}{\underline{\nu}}
\newcommand{\urho}{\underline{\rho}}
\newcommand{\ukappa}{\underline{\kappa}}
\newcommand{\uxi}{\underline{\xi}}
\newcommand{\uzeta}{\underline{\zeta}}
\newcommand{\uchi}{\underline{\chi}}
\newcommand{\udelta}{\underline{\delta}}
\newcommand{\uX}{\underline{X}}
\newcommand{\uY}{\underline{Y}}
\newcommand{\lb}{\left(}
\newcommand{\rb}{\right)}
\newcommand{\lbr}{\left\{}
\newcommand{\rbr}{\right\}}
\newcommand{\dd}{{\rm d}}
\newcommand{\1}{\mathbbm{1}}
\newcommand{\df}{\stackrel{\Delta}{=}}
\newcommand{\be}[1]{\begin{equation}\label{#1}}
\newcommand{\ee}{\end{equation}}
\newcommand{\fkg}{\stackrel{\mathsf{FKG}}{\prec}}
\definecolor{darkblue}{rgb}{0,0.3,0.9}
\newcommand{\uell}{\underline{\ell}}
\begin{document}
\date{\today} 

\title[Polymers under Geometric Area Tilts ]
{Confinement of Brownian Polymers under Geometric Area Tilts}

\author{Pietro Caputo}
\address{Dipartimento di Matematica e Fisica, Roma Tre University, Rome, Italy}
\email{caputo@mat.uniroma3.it}

\author{Dmitry Ioffe}
\address{Faculty of IE\&M, Technion, Haifa 32000, Israel}
\email{ieioffe@ie.technion.ac.il}
\thanks{DI was supported by the Israeli Science Foundation grant 
1723/14.}

\author{Vitali Wachtel}
\address{Institut f\"ur Mathematik, Universit\"at Augsburg, D-86135 Augsburg, 
Germany}
\email{vitali.wachtel@math.uni-augsburg.de}
\thanks{VW was supported by the Humboldt Foundation.}

\maketitle

\begin{abstract}
We consider tightness for families of non-colliding Brownian bridges  above a hard wall,
which 
are subject to geometrically growing self-potentials of tilted area type. 
The model is introduced in order to mimic level lines of $2+1$  discrete Solid-On-Solid 
 random interfaces above a hard wall. 
\end{abstract}

\section{Brownian polymers under 
geometric area tilts. } 
\subsection{Introduction}
Ensembles of non-intersecting random lines, 
 both in the discrete and continuous setups, as well as their  scaling limits as the 
 linear size of the system
 grows,   
play a significant role in  the probabilistic analysis of
various problems in random matrices, interacting particle systems and 
effective interface models; see e.g.\ \cite{FS2003step,johansson2005random,spohn2005kardar,
corwinhammond,BCR2015,Bornemann, 
johansson2017edge,WFS2017,CD2018,duits2018} 
and references therein. 

A salient feature of these ensembles is that  the   unconditional reference distribution of the individual 
random lines is assumed to be the same for all the random lines in a stack. 
The ensuing exchangeability paves the way for an application of Karlin-McGregor type formulas, and gives 
rise  to various determinantal structures.

In this paper, we introduce a  
model consisting of an unbounded number of non-intersecting 
Brownian bridges,  
above a hard wall and
subject to geometrically increasing area tilts. 

	Thus, in our model reference statistics
of individual  lines depends on their serial numbers  - there is a stronger pressure towards the wall on 
paths further down the stack.  The main result of the paper is that the ensemble in question does not blow up
as the number of bridges grows to infinity. There is a longer program to attend to: In subsequent works we shall
describe the limiting (infinite) line ensemble and try to derive appropriate scaling limits from tilted random walks
and, eventually, from level lines of discrete random interfaces. Indeed, 
the main motivation comes from the study of the fluctuations of level lines in the random surface separating 
low-temperature phases in the solid-on-solid (SOS) approximation of the 3D Ising model. 
Before describing our model and main results, 
let us give more details about the context where the model naturally arises, 
we refer to \cite{caputoetal2014,caputoetal2016,IV2016,lacoin2017wetting},
for further  information. 


\subsection{Level lines of SOS interfaces above a hard wall.} 
Given a large integer $L$, consider the square box $\Lambda_L$ of side $2L$ in $\bbZ^2$, centered at the origin.  The $(2+1)$-dimensional SOS Gibbs measure on $\Lambda_L$ with zero boundary conditions is the distribution $\mu_L$ over integer height functions $\varphi:\Lambda_L\mapsto \bbZ$, such that the probability of each  given configuration $\varphi$ is proportional to  
$$
\exp\left(-\beta\sum_{x\sim y}|\varphi(x)-\varphi(y)| \right),
$$
where $\beta>0$ denotes the inverse temperature, and the sum extends over all pairs of 
neighbouring sites in $\bbZ^2$, with the boundary constraint $\varphi(y)=0$ for all 
$y\in\bbZ^2\setminus \Lambda_L$.   When $\beta$ is larger than some fixed constant, which we 
assume throughout this discussion, the surface $\varphi$ with distribution $\mu_L$ is typically 
flat around height zero with local upward and downward fluctuations of depth $h$ with density roughly 
proportional to $e^{-4\beta h}$ for all $h\in\bbN$. 
If the surface is constrained to stay above a hard wall, that is $\mu_L$ is conditioned on the 
event $\{\varphi (x)\geq 0, x\in \Lambda_L\}$, then it is well known that the wall pushes 
the surface globally to a height $$H(L)\sim \frac1{4\beta}\log L,$$ see 
\cite{bricmontetal1986,caputoetal2014}. This phenomenon, known as entropic repulsion, is 
heuristically explained as follows: a global shift of the surface from height $h-1$ to height $h$ provides 
room for downward fluctuations of depth $h$, which gives a bulk entropic gain of order $|\Lambda_L |e^{-4\beta h}$, 
while forcing an energy loss proportional to the size of the boundary $|\partial\Lambda_L|$; the surface then 
stabilises when energy and entropy balance out, that is when $h$ equals  $H(L)\sim \frac1{4\beta}\log L$. 

\begin{figure}
\includegraphics[angle =0,height = 4.5cm]{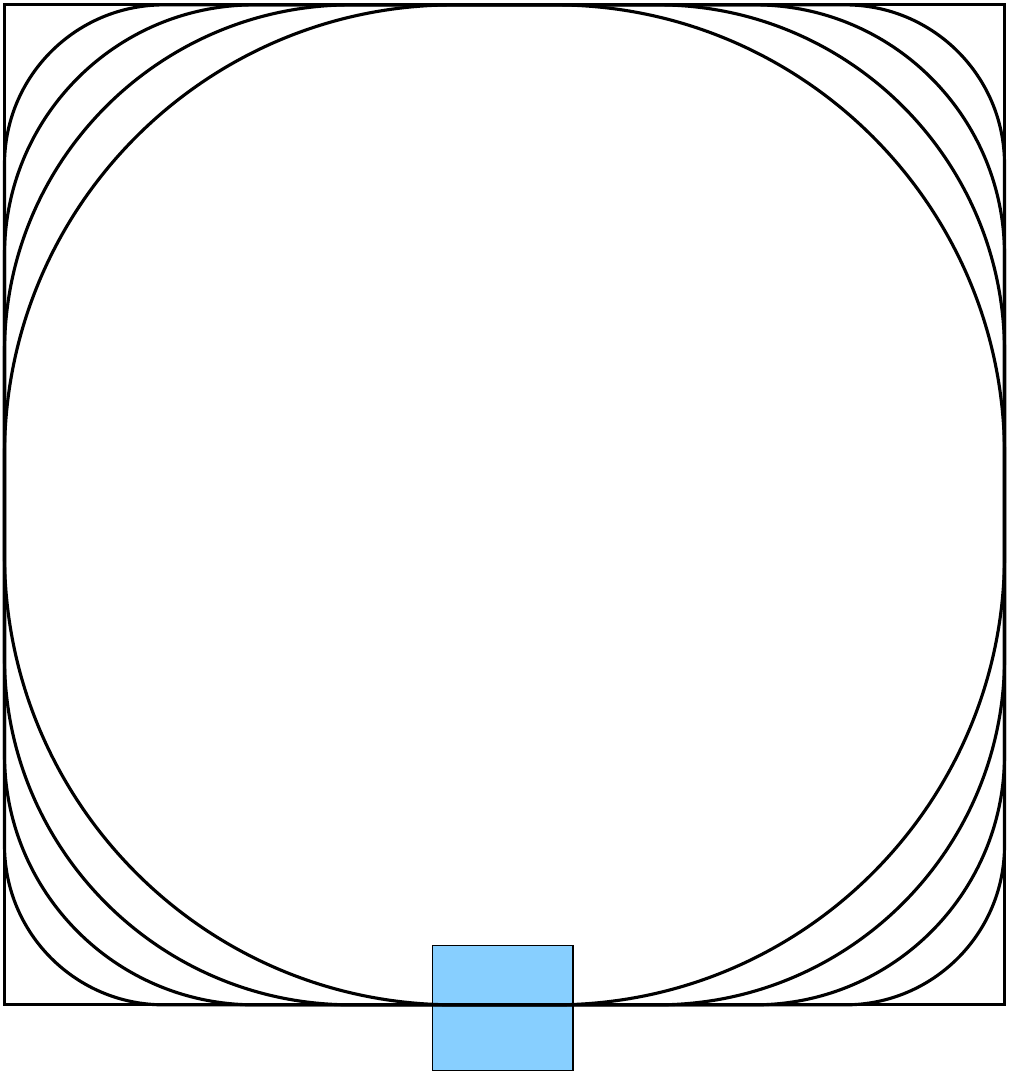}
\hskip1cm
\includegraphics[angle =0,height = 4.2cm]{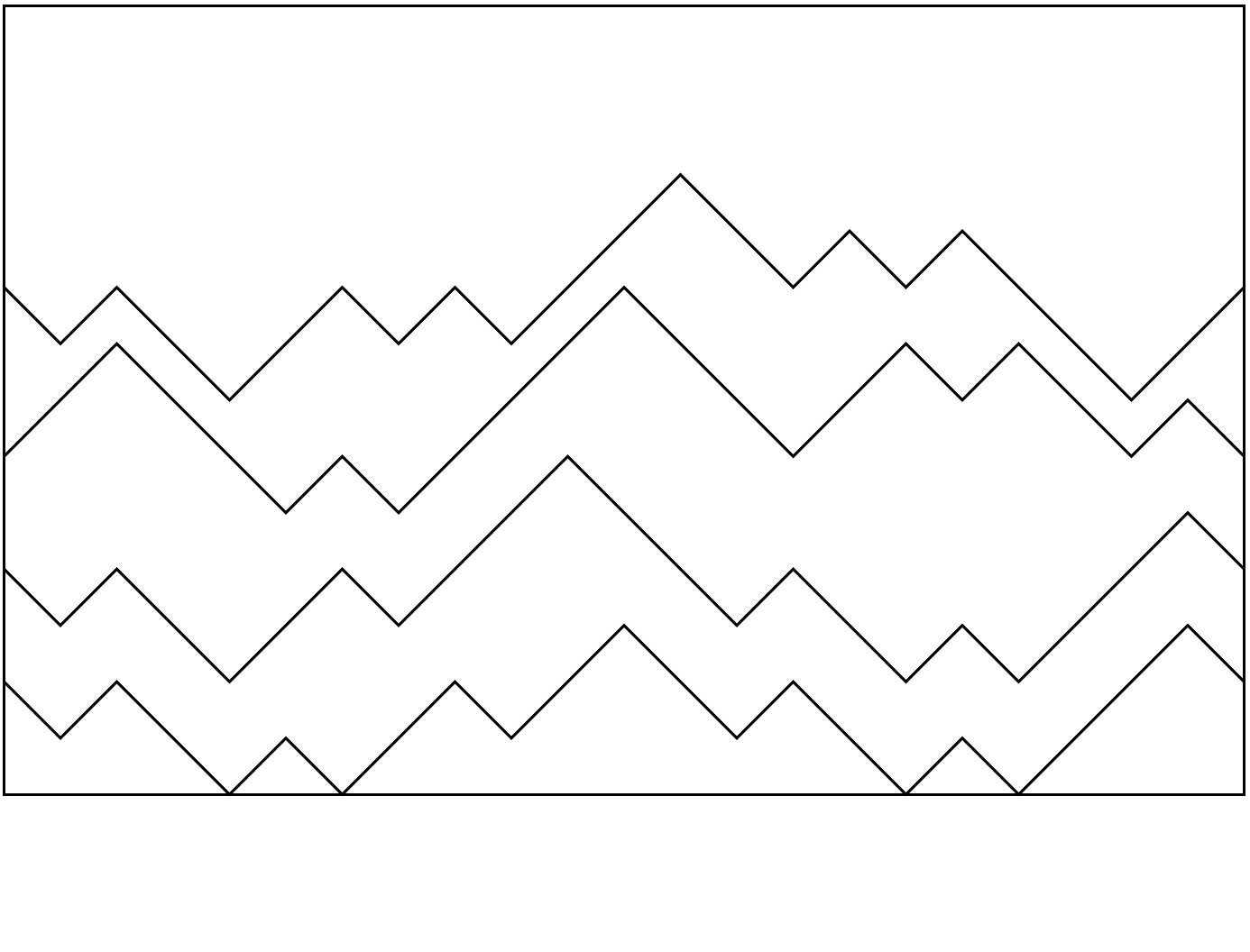}
\vspace{-0.1in}
\caption{Left: sketch of the top four nested limiting lines $\calL_h$, $h=1,\dots,4$ within the square $Q=[-1,1]^2$. The shaded region contains only macroscopically flat portions of the lines. Right: fluctuations of the lines on a scale of smaller order obtained by zooming in the shaded region.}
\label{fig:corners}
\vspace{-0.01cm}
\end{figure}

In  \cite{caputoetal2016} it was shown that at equilibrium,  the SOS surface above the wall is characterized by a uniquely defined ensemble  $\Gamma$, consisting of  the nested macroscopic contours (closed loops in the dual lattice, within $\Lambda_L$) $$\Gamma=\{\gamma_1\subseteq \dots\subseteq \gamma_n\subseteq\partial \Lambda_L\},$$ 
where $n=H(L)$, with the interpretation that $\gamma_h$ is the $(H(L)-h)$-th level line of the surface, so that $\varphi$ grows from at most $H(L)-h$ to at least $H(L)-h+1$ upon crossing $\gamma_h$. Moreover, it was shown that the contour ensemble satisfies a law of large numbers, that is if the box $\Lambda_L$ is rescaled to the square $Q=[-1,1]^2$, then when $L\to\infty$, the contours concentrate around a limiting shape consisting of infinitely many nested loops $\calL_1\subseteq\calL_2\subseteq\cdots\subseteq \partial Q$. 
The loops $\calL_h$ 
can be identified via constrained Wulff variational principles: each single $\calL_h$ 
is a rescaled 
Wulff plaquette such as 
that already studied in the context of $2D$ Ising model \cite{schonmannshlosman}.  
A related low temperature SOS-type model 
which, under appropriate rescaling,  features a stack of identical Wulff plaquettes was studied in \cite{IS2017}.

In our case  nested loops $\calL_h$ are strictly ordered by inclusion.   
However, apart from round pieces in the 
neighbourhood of the four corners of $Q$ 
(which are different for different plaquettes $\calL_h$-s), all $\calL_h$, $h\geq 1$, contain flat pieces, 
where they coincide with the boundary of $Q$; see Figure \ref{fig:corners}. In particular, there 
exists $u\in(0,1)$ (with $u=u(\beta)\to 1$ as $\beta\to\infty$) such that  
the portion $I_u=[-u,u]\times\{-1\}\subset\partial Q$ of the bottom side of $Q$ is contained in all loops $\calL_1,\calL_2,\dots$ 

To study fluctuations of the level lines of the surface it is then necessary to zoom in the shaded region from Figure \ref{fig:corners} and understand at what rate the contour ensemble $\Gamma$ converges to the flat limit there as $L\to\infty$. In this direction, it was shown in   \cite{caputoetal2016}  that the maximal distance of the top contour $\gamma_1$ from the bottom boundary of $Q$ is typically of order $L^{\frac13 + o(1)}$ and that fluctuations of that order appear on every subinterval of length $L^{\frac23 + o(1)}$, where $o(1)$ denotes a quantity vanishing as $L\to\infty$. 

In a first attempt,  we may approximate the $n$ paths in the above mentioned region by $n$ ordered height functions  $\Psi=\{\psi_1,\dots,\psi_n\}$ with free endpoints, where $\psi_i:[-L,L]\mapsto\bbN$, $\psi_i(x)\geq \psi_{i+1}(x)\geq 0$ for all $x$; see Figure \ref{fig:corners}. As discussed in \cite{caputoetal2016}, contour analysis based on cluster expansion techniques shows that the statistical weight of a configuration $\Psi$ of such lines is essentially given by 
\begin{equation}\label{eq:walks}
\exp\left(-\sum_{i=1}^n\left[\beta E(\psi_i) + \frac{ a \lambda^i}L \,A(\psi_i)\right] \right).
\end{equation}
Here $E(\psi_i)=\sum_{x}|\psi_i(x+1)-\psi_i(x)|$ denotes the energy cost of the $i$-th path, $a=a(\beta)>0$ and $\lambda=\lambda(\beta)>1$ are suitable constants, $A(\psi_i)=\sum_{x}\psi_i(x)$ is the area between the path $\psi_i$ and the bottom layer at height zero, while the term $a\lambda^i$ quantifies the entropic repulsion felt by the $i$-th path, which in these new coordinates becomes an effective attraction to the bottom. In agreement with their mutual order,  
the attraction felt by the $i$-th path  is stronger than the attraction felt by the $(i-1)$-th path.  
It should be remarked that in the  description \eqref{eq:walks} we are completely neglecting some nontrivial interaction terms between the paths $\psi_i$, which account for possible weak attractive and repulsive potentials along the polymer boundaries (pinning effects); while 
these terms should be 
indeed irrelevant if $\beta$ is sufficiently large, showing that this is actually the case can be a challenging problem; see \cite{caputoetal2016,ioffeshlosmantoninelli}.

\subsection{Ferrari-Spohn and Dyson-Ferrari-Spohn diffusions.} 
The expression \eqref{eq:walks} describes a growing number of ordered random walks above a wall with geometrically growing area tilts. 
The case of a single walk $n=1$ is an effective random walk model for the  critical pre-wetting problem in the $2D$ Ising model; 
see \cite{schonmannshlosman,velenik2004}. This case was recently studied in \cite{ioffeshlosmanvelenik}, where it was shown in 
particular that if $\ell=L^{\frac23}$, then the rescaled path 
\be{eq:FS-scaling}
x(t) = \frac1{\sqrt{\ell}}\,\psi_1(t\ell)\,,\qquad t\in\bbR,
\ee
as $L\to\infty$ converges weakly to the stationary Ferrari-Spohn diffusion, namely the reversible diffusion process 
on $\bbR_+$ with potential given by the logarithm of the Airy function, which was first introduced in \cite{ferrarispohn2005}. 
On the other hand, for fixed $n$, and for $\lambda=1$, the ensemble of random lines described by \eqref{eq:walks} has been analysed in \cite{ioffevelenikwachtel}, where it is shown that the vector of rescaled trajectories
 $$\underline x(t) = \frac1{\sqrt{\ell}}\,(\psi_1(t\ell),\dots,\psi_n(t\ell))\,,\qquad t\in\bbR,$$
 converges weakly to the stationary Dyson-Ferrari-Spohn diffusion, 
 that is the determinantal process on $\bbR_+^n$ corresponding to $n$ non-intersecting Ferrari-Spohn diffusions. 
 
 \subsection{The model and the result.}
 The case of $n$ ordered random walks with $n$ growing with $L$ (e.g.\  $n\propto\log L$ as in the 
 original setting 
 of SOS level lines) and $\lambda>1$
 %
 will be the subject of a separate paper. At this stage it is even unclear whether, under the Ferari-Spohn scaling 
 \eqref{eq:FS-scaling}, 
 such ensemble 
 has a meaningful limit. This is precisely the issue which we explore here. For simplicity, 
 and in order to stress main quantitative features of the phenomenon under consideration, 
 we shall consider a continuous analogue of the above model, where discrete random walk 
 paths are replaced by Brownian motion paths. The corresponding model then becomes the ensemble of 
 non-intersecting Brownian motions $X_i\geq X_{i+1}\geq 0$, $i=1,\dots,n$ on $[-T,T]$, for some $T>0$, with free endpoints, 
 deformed by the statistical weight
\begin{equation}\label{eq:brownians}
\exp\left(-\sum_{i=1}^n  a \lambda^i A(X_i) \right)
\end{equation}
where $a>0,\lambda>1$ and $A(X_i)=\int_{-T}^TX_i(s) ds$.

 If $n=1$ then  
 taking $T\to\infty$ one recovers 
(without further rescaling) the stationary Ferrari-Spohn diffusion. 
Similarly, 
if $n$ is fixed and $\lambda=1$, then $T\to\infty$ yields the stationary Dyson-Ferrari-Spohn diffusion. 
The invariant measure of the latter  is given by Slater determinants associated to the eigenfunctions of Airy differential operator
with Dirichlet boundary condition at the origin. 
 As $n\to\infty$
there are various (edge, bulk, $\dots$) scaling regimes, which were described
on the level of the corresponding determinantal point processes in \cite{Bornemann}.

In our case; $n$ growing with $T$ and $\lambda>1$,  
 no scaling is needed and 
the random line ensemble 
described by \eqref{eq:brownians} will have a 
very 
different limiting behavior, and a very different weak limit 
which would, with appropriate modifications related to the nature of area tilts,  
fall into the framework of $\bbN\times\bbR$-indexed  Brownian-Gibbs ensembles as developed
 in \cite{corwinhammond}.
 Indeed, it seems natural to conjecture that as $n\to\infty$ and $T\to\infty$ (regardless of the order), 
the process converges
to a unique $\bbN\times\bbR$-indexed random line ensemble, in particular
that  for every $k\in\bbN$ the top $k$ lines $X_1,\dots,X_k$ weakly
 converge to a stationary $k$-dimensional 
 process. 
For the moment,  
guessing the precise structure   of the limiting process remains an intriguing question. 
On the other hand, 
the route to proving   convergence per se is, in light 
of the stability results we derive here, rather clear - see Remark~\ref{rem:Tightness}  following \eqref{eq:max-exp-bound} below.    This issue will be addressed in a forthcoming separate 
paper.  

 Here   we focus on deriving  
 uniform 
 stability properties of the system of paths associated to \eqref{eq:brownians} as $n,T\to\infty$. 
 Namely, we prove  the following 
 strong confinement statement about the top path $X_1$  
(and hence about the whole stack which is sandwiched 
 between $X_1$ and the wall):
 Fix $\varepsilon > 0$ and let 
  $\left[\cdot\right]_+$ denote the positive part.
 Then 
 expectations of curved maxima
 \be{eq:max-exp-bound}
\sup_{T, n}\, \bbE \lb  \max_{t\in[-T,T]}\big[
 {
 	 X_1(t) -
 |t|^\varepsilon 
}
 ]_+ \rb < \infty, 
 \ee
 are uniformly bounded in $T$ and $n$. See Theorem~\ref{thm:max-control} for the precise statement. 

\begin{remark} 
	\label{rem:Tightness} 
	The bound \eqref{eq:max-exp-bound} 
	 paves the way for importing techniques and ideas developed in \cite{corwinhammond}, in particular for deriving 
	 appropriate adjustments of Proposition~3.5 and of the tightness arguments employed for the proof of
	 Proposition~3.6 
	 of the latter work. Indeed, fix $k\in \bbN$ and an interval $(-a, a)$ 
 and, for $n>k$ and $T >a $,  consider top $k$ paths $X_1,\dots,X_k$ of  the line ensemble of $n$ 
 non-intersecting Brownian motions on $[-T, T]$ under 
geometric area tilts \eqref{eq:brownians}.   
But then \eqref{eq:max-exp-bound}	
	means two things: First of all, by Brownian scaling and stochastic domination - 
	see Subsections~\ref{sub:BR} and \ref{sub:SD}  below, 
	  the height 
	of the $(k+1)$-st path $X_{k+1}$ is, uniformly 
	in $n$ and $T$, under control in the sense that it sits below a random integrable shift of 
	the appropriate rescaling of $ t^{\epsilon}$. Next, given a 
	realization of $X_{k+1}$, the top paths $X_1,\dots,X_k$ could be, by Brownian-Gibbs property,  
	resampled  according to Brownian bridge measures modified by exponential weights \eqref{eq:brownians}. 
	But these weights are, by the very same \eqref{eq:max-exp-bound}, also subject to a uniform control. 
	Resampling with respect to reference Brownian bridge measures (that is with zero area tilts)  
	is precisely the procedure employed for
	Airy line ensembles in \cite{corwinhammond}.
	As a result many probabilistic estimates 
	could be, up to uniformly bounded corrections,  inherited from the resampling estimates developed
	in \cite{corwinhammond}. 
	\end{remark}

\bigskip
We proceed with precise notation for the polymer measures
we study here. 
\subsubsection{\bf Notation for underlying Brownian motion and Brownian bridges.}  
In the sequel we shall use the same notation for path measures of underlying Brownian motion and 
Brownian bridges and for expectations with respect to these
path measures. 
For  $S < T$ and $x\in\bbR$, 
let ${\mathbf P}^x_{S,T} $ be the  path measure of the Brownian 
motion $X$ on $[S,T]$ which starts at $x$ at time $S$; $X (S ) = x$. 
We can record ${\mathbf P}^x_{S,T} $ as follows: 
\be{eq:BM-BB} 
{\mathbf P}^x_{S,T} \lb F (X ) \rb = \int {\mathbf B}^{x, y}_{S,T} \lb F ( X )\rb \dd y
\ee
where ${\mathbf B}^{x,y}_{S,T} $ the  {\em unnormalized} path measure of the Brownian 
bridge  $X$ on $[S,T]$ which starts at $x$ at time $S$ and ends at 
$y$ at time $T$; $X (S ) = x,\, X (T )= y$.  
{
In this notation 
$${\mathbf B}^{x,y}_{S,T}  (1 ) = \tfrac1{\sqrt{2\pi(T-S)}}\,{\rm e}^{- \frac{(y-x)^2}{2(T-S )}}.$$ 
}
\smallskip 

For an 
$n$-tuple $\ux\in\bbR^n$, set 
\[
{\mathbf P}^{\ux}_{S,T} =
{\mathbf P}^{x_1}_{S,T} \otimes {\mathbf P}^{x_2}_{S,T} \otimes \cdots \otimes 
{\mathbf P}^{x_n}_{S,T} .
\]
Similarly for 
$n$-tuples $\ux, \uy \in\bbR^n$, set 
\[
{\mathbf B}^{\ux , \uy}_{S,T} =
{\mathbf B}^{x_1, y_1 }_{S,T} \otimes {\mathbf B}^{x_2 , y_2}_{S,T} 
\otimes \cdots \otimes 
{\mathbf B}^{x_n , y_n }_{S,T} .
\]
For symmetric intervals $S=-T$ we shall employ a reduced 
notation  ${\mathbf P}^{\ux}_{T}$, ${\mathbf B}^{\ux , \uy}_{T}$ and 
so on. 

\subsubsection{\bf Partition functions and polymer measures  with geometric area tilts.}
Given a function $h$, the signed $h$-area under the trajectory of $X$ is defined as
\be{eq:T-area} 
\calA_{S,T}^h \lb X\rb = \int_{S}^T h (t ) X (t)\dd t .
\ee
We shall drop the superscript if $h\equiv 1$ and use $\calA_{S, T} \lb X\rb$ accordingly. 
For  $n\in\bbN$ define
\be{eq:Aplus}
\bbA_n^+ = \{ \ux \in \bbR^n \,:\, x_1 >\dots > x_n > 0\} .
\ee
Polymer measures which we consider in the sequel are always  
concentrated on the set $\Omega^{+}_{n, S, T}$ of  $n$-tuples  $\uX$, 
\be{eq:Omega-Set} 
\Omega^{+}_{n, S, T} = \lbr \uX~:~ \uX (t )\in \bbA_n^+\ \ \forall\, t\in [S,T]\rbr
\ee
Following our convention we shall write $\Omega^{+}_{n, T} =\Omega^{+}_{n, -T, T}$.

Given 
 $n\geq 1$,  $a>0$ and $\lambda >1$ consider now the
partition 
functions
\be{eq:PF-BC-T} 
\begin{split} 
 Z_{n, T}^{ \ux , \uy } (a , \lambda ) 
&\df  {\mathbf B}^{\ux , \uy}_{T} 
\lb 
\1_{\Omega_{n, T}^+}
{\rm e}^{-\sum_1^n a\lambda^{i-1}\calA_{T} (X_i )}
\rb \\ 
&{\rm and}\\ 
{\mathcal Z}_{n ,  T} (a, \lambda )
&\df 
\int_{\bbA_{n}^+} 
\int_{\bbA_{n}^+}
Z_{n, T}^{ \ux , \uy}  (a, \lambda)  
\dd\ux 
\dd 
\uy .
\end{split} 
\ee
Partition functions ${\mathcal Z}_{n,T} (a, \lambda )$  
give rise to polymer measures 
$\bbP_{n, T}\left[ \cdot  ~|a, \lambda  \right]$. 
with geometric area tilts $\rho_i \equiv a\lambda^{i-1}$.   
 Namely, 
\be{eq:PolMeas} 
\bbP_{n, T}
\left[ F (\uX ) ~|a, \lambda  \right]  
\ 
\df \frac{1}{{\mathcal Z}_{n ,  T} (a, \lambda )} 
\int_{\bbA_{n}^+} 
\int_{\bbA_{n}^+}
{\mathbf B}^{\ux , \uy}_{T} 
\lb F (\uX )
\1_{\Omega_{n, T }^+}
{\rm e}^{-\sum_1^n \calA_{T}^{\rho_i} (X_i )} 
\rb 
\dd \ux \dd\uy 
\ee

\subsubsection{\bf Main result.} The main result of this  note could be formulated as follows:  
\begin{theorem} 
\label{thm:main} 
Let $\calP_{n, T}\left[ \cdot  ~|a, \lambda  \right]$ 
be the (one-dimensional) distribution of the position of the top path $X_1 (0 )$ under 
$\bbP_{n, T}\left[ \cdot  ~|a, \lambda  \right]$. That is  
\[ 
 \calP_{n, T}\left[  f({X}) ~|a, \lambda  \right]
 = \bbP_{n, T} \left[ f (X_1 (0))  ~|a, \lambda  \right],
\]
for bounded measurable $f:\bbR\mapsto\bbR$. 
Then for any fixed $a>0$ and  $\lambda >1$
 the family of one-dimensional 
 distributions $\lbr \calP_{n, T}\left[ \cdot  ~|a, \lambda  \right]\rbr_{n, T}$ 
 is tight. In other words the top path does not fly away as the number of
 polymers and the length of their horizontal span grow. 
\end{theorem}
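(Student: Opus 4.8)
The plan is to establish a uniform-in-$(n,T)$ tail bound for $X_1(0)$, from which tightness follows immediately. The first step is to reduce the problem to a single effective line. By the FKG/stochastic-domination machinery alluded to in the remark above (Subsections on Brownian scaling and stochastic domination), the top path $X_1$ under $\bbP_{n,T}[\,\cdot\,|a,\lambda]$ is stochastically dominated by $X_1$ under the analogous ensemble with the wall constraint $X_i>0$ retained but with the mutual ordering constraints and the area tilts on $X_2,\dots,X_n$ relaxed; more to the point, adding lines below only pushes $X_1$ down, so $X_1$ is dominated by its law in the single-line model with area tilt $a$ (i.e. $\lambda^{i-1}$ with $i=1$). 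That single-line model is, after taking $T\to\infty$, the stationary Ferrari--Spohn diffusion, whose one-point marginal has Gaussian-type tails; for finite $T$ one needs a tail bound uniform in $T$, which is standard for the tilted Brownian motion above a wall. Thus the bulk of the argument is a monotonicity/coupling statement: I would make precise that $\bbP_{n,T}[X_1(0)\ge r\,|a,\lambda]$ is nonincreasing in $n$ (each additional line, being ordered below and area-tilted, only increases downward pressure) and that it is controlled by the $n=1$ case.

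Concretely, the key steps in order: (1) prove the stochastic-domination inequality $X_1$ under $\bbP_{n,T}\preceq X_1$ under $\bbP_{1,T}$, by writing $\bbP_{n,T}$ as a mixture over the trajectory of $X_2$ and noting that, conditionally, $X_1$ is a Brownian bridge above the barrier $\max(X_2,0)$ with area tilt $a$ — increasing this barrier from $0$ to $X_2$ can only decrease $X_1$, and one iterates down the stack using the Brownian-Gibbs property; (2) prove a tail bound for $\bbP_{1,T}[X_1(0)\ge r\,|a,\lambda]$ uniform in $T$, e.g. by comparing with the unconstrained area-tilted Brownian motion on $\bbR$ and using the explicit Cameron--Martin/Girsanov tilt together with a union bound, or by invoking convergence to the Ferrari--Spohn diffusion together with a uniform integrability input; (3) combine to get $\sup_{n,T}\bbP_{n,T}[X_1(0)\ge r\,|a,\lambda]\to 0$ as $r\to\infty$, which is precisely tightness of the family $\{\calP_{n,T}\}$.

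The main obstacle I expect is step (1): making the "adding lines only pushes $X_1$ down" heuristic into a rigorous stochastic-domination statement. The subtlety is that the area tilt couples the trajectory globally, so one cannot argue pointwise in $t$; instead one should use the FKG inequality for the Brownian-bridge path measures (the area tilt $e^{-a\calA_T(X_i)}$ and the indicator $\1_{\Omega_{n,T}^+}$ are both decreasing functionals, hence FKG-compatible), and then the monotonicity of conditional expectations of increasing events under increasing conditioning. A clean route is to introduce the barrier-$g$ one-line measure with density $\propto\1_{\{X\ge g\}}\,e^{-a\calA_T(X)}$ and show it is stochastically decreasing in $g$; applying this with $g=0$ versus $g=\max(X_2,0)$ and then integrating out $X_2$ gives the domination at the top, after which one peels off $X_2,\dots,X_n$ one at a time. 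A secondary technical point is the uniformity in $T$ in step (2): one must verify that the partition function normalization does not degenerate, which is where a lower bound on $\calZ_{1,T}(a,\lambda)$ — e.g. by restricting to paths staying in a fixed bounded tube near the wall on $[-T,T]$, costing only $e^{-cT}$ in probability but gaining the same order in the area tilt — is needed.
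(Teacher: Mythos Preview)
Your proposal contains a fundamental error in step (1): the monotonicity goes the \emph{wrong way}. In the model $X_1 > X_2 > \cdots > X_n > 0$, the path $X_2$ acts as a \emph{floor} for $X_1$, not a ceiling. Conditionally on $X_2$, the top path $X_1$ is a tilted Brownian bridge constrained to stay \emph{above} $X_2$; raising the floor from $0$ to $X_2$ can only push $X_1$ \emph{up}, not down. Hence $X_1$ under $\bbP_{n,T}[\,\cdot\,|a,\lambda]$ stochastically \emph{dominates} $X_1$ under $\bbP_{1,T}[\,\cdot\,|a]$, and more generally the law of $X_1$ is monotone \emph{increasing} in $n$. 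This is stated explicitly in the paper (the sequence $M_{n,T}(a,\lambda)$ of expected maxima is nondecreasing in $n$), and it is precisely what makes the theorem nontrivial: the single-line Ferrari--Spohn bound gives a \emph{lower} bound on the top path, not an upper one.

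The paper's actual argument exploits the geometric growth of the tilts. Conditioning on $X_1$, the bottom $n$ paths $(X_2,\dots,X_{n+1})$ form an $n$-line ensemble with ceiling $X_1$ and tilts $(a\lambda,a\lambda^2,\dots)$; removing the ceiling dominates this by $\bbP_{n,T}[\,\cdot\,|a\lambda,\lambda]$. Brownian scaling then identifies the latter with a rescaled copy of $\bbP_{n,T\lambda^{2/3}}[\,\cdot\,|a,\lambda]$, with all heights shrunk by $\lambda^{-1/3}$. This yields a recursion of the form
\[
m_{n+1}\ \le\ \lambda^{-1/3}\,m_n\ +\ c,
\]
for an appropriate curved-maximum functional $m_n=\sup_T\bbE_{n,T}[\xi^T_\varphi(X_1)]$, with $c$ a single-polymer quantity bounded uniformly in $T$. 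The contraction factor $\lambda^{-1/3}<1$ then gives $\sup_n m_n<\infty$. The crucial point you are missing is that tightness comes not from ``more lines push $X_1$ down'' (false) but from ``line $k{+}1$ feels a tilt $\lambda$ times stronger than line $k$, hence lives on a scale $\lambda^{-1/3}$ times smaller,'' so the stack telescopes geometrically.
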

Theorem~\ref{thm:main} is an immediate consequence of a much stronger confinement
statement for the {\em curved} maximum of the whole path - see Theorem~\ref{thm:max-control} below.
\subsection{Structure of the proof.} The proof is built upon  a recursion which relies on 
the Brownian scaling 
and on stochastic domination for (a more general class of) 
polymers with area tilts:
\smallskip

\subsubsection{\bf Brownian scaling.} 
\label{sub:BR}
Consider the following mapping of an $n$-tuple 
$\uX$ of paths  on an interval $[-\lambda^{2/3}T , \lambda^{2/3}T]$ 
to $n$-tuple $\uY$ of paths on  $[-T , T]$: 
\be{eq:BrownScale}  
\uY (\cdot  ) = \frac{1}{\lambda^{1/3}} \uX (\lambda^{2/3}\cdot ) .
\ee
The next lemma states that if $\uY$ is related to $\uX$ via \eqref{eq:BrownScale}, then 
$\uY$ has distribution $\bbP_{n, T} \left[ \cdot   ~|a\lambda , \lambda  \right]$ if and only if 
$\uX$ has distribution $\bbP_{n, T\lambda^{2/3}} \left[ \cdot   ~|a , \lambda  \right]$. 
\begin{lemma} 
\label{lem:scaling} 
For all $n,T$, $a,\lambda$, and $ \ux , \uy$
\be{eq:BScale-1} 
Z_{n,  T\lambda^{2/3}}^{ \ux , \uy}( a, \lambda) = \lambda^{-\frac{n}3}
Z_{n, T}^{\lambda^{-1/3} \ux , \lambda^{-1/3}\uy} ( a\lambda , \lambda). 
\ee
Furthermore, for any bounded measurable function $F$ on $\Omega_{n,T}^+$,
\be{eq:BScale-11} 
\bbP_{n, T}
\left[ F (\uX ) ~|a\lambda, \lambda  \right]  = \bbP_{n, T\lambda^{2/3}}
\left[ F (\uX^{(\lambda^{2/3})} ) ~|a, \lambda  \right],
\ee 
where $\uX^{(\lambda^{2/3})}=\frac{1}{\lambda^{1/3}} \uX (\lambda^{2/3}\cdot ) $. 
\end{lemma}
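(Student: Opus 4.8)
The plan is to verify the scaling identity \eqref{eq:BScale-1} directly from the definition \eqref{eq:PF-BC-T} of the Brownian bridge partition function, and then deduce \eqref{eq:BScale-11} by combining \eqref{eq:BScale-1} with a change of variables in the integrals over $\ux,\uy$ defining ${\mathcal Z}_{n,T}$ and the polymer measure \eqref{eq:PolMeas}. First I would recall the elementary scaling property of a single Brownian bridge: if $X$ is a Brownian bridge on $[-\lambda^{2/3}T,\lambda^{2/3}T]$ from $x$ to $y$ under ${\mathbf B}^{x,y}_{\lambda^{2/3}T}$, then the rescaled path $Y(\cdot)=\lambda^{-1/3}X(\lambda^{2/3}\cdot)$ is a Brownian bridge on $[-T,T]$ from $\lambda^{-1/3}x$ to $\lambda^{-1/3}y$. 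At the level of unnormalized measures this picks up an explicit Jacobian factor coming from the Gaussian normalization: from the displayed formula ${\mathbf B}^{x,y}_{S,T}(1)=\frac{1}{\sqrt{2\pi(T-S)}}e^{-(y-x)^2/2(T-S)}$ one reads off that ${\mathbf B}^{x,y}_{\lambda^{2/3}T}(1)=\lambda^{-1/3}{\mathbf B}^{\lambda^{-1/3}x,\lambda^{-1/3}y}_{T}(1)$, and more generally ${\mathbf B}^{x,y}_{\lambda^{2/3}T}(G(X))=\lambda^{-1/3}{\mathbf B}^{\lambda^{-1/3}x,\lambda^{-1/3}y}_{T}(G(\lambda^{1/3}Y(\lambda^{-2/3}\cdot)))$. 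Tensoring over the $n$ coordinates produces the prefactor $\lambda^{-n/3}$.

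Next I would track how the two functionals in the integrand of $Z^{\ux,\uy}_{n,\lambda^{2/3}T}$ transform under $X_i\mapsto\lambda^{1/3}Y_i(\lambda^{-2/3}\cdot)$. The ordering/positivity constraint is scale invariant: $\uX(t)\in\bbA_n^+$ for all $t\in[-\lambda^{2/3}T,\lambda^{2/3}T]$ if and only if $\uY(s)\in\bbA_n^+$ for all $s\in[-T,T]$, so $\1_{\Omega^+_{n,\lambda^{2/3}T}}$ becomes $\1_{\Omega^+_{n,T}}$. For the area tilts, the key computation is
\[
\calA_{\lambda^{2/3}T}(X_i)=\int_{-\lambda^{2/3}T}^{\lambda^{2/3}T}X_i(t)\,\dd t
=\lambda^{1/3}\lambda^{2/3}\int_{-T}^{T}Y_i(s)\,\dd s=\lambda\,\calA_{T}(Y_i),
\]
using the substitution $t=\lambda^{2/3}s$. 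Hence $a\lambda^{i-1}\calA_{\lambda^{2/3}T}(X_i)=(a\lambda)\lambda^{i-1}\calA_{T}(Y_i)$, so the exponential weight $e^{-\sum a\lambda^{i-1}\calA_{\lambda^{2/3}T}(X_i)}$ turns into exactly the weight appearing in $Z^{\ux',\uy'}_{n,T}(a\lambda,\lambda)$ with $\ux'=\lambda^{-1/3}\ux$, $\uy'=\lambda^{-1/3}\uy$. Assembling the Jacobian prefactor with these two observations yields \eqref{eq:BScale-1}.

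Finally, for \eqref{eq:BScale-11} I would write $\bbP_{n,T\lambda^{2/3}}[F(\uX^{(\lambda^{2/3})})\,|\,a,\lambda]$ as the ratio of $\int\int {\mathbf B}^{\ux,\uy}_{T\lambda^{2/3}}(F(\uX^{(\lambda^{2/3})})\1_{\Omega^+}e^{-\sum\calA^{\rho_i}_{T\lambda^{2/3}}(X_i)})\,\dd\ux\,\dd\uy$ over ${\mathcal Z}_{n,T\lambda^{2/3}}(a,\lambda)$; applying the bridge-level scaling to the inner measure (noting that $F(\uX^{(\lambda^{2/3})})=F(\uY)$ by the very definition of $\uX^{(\lambda^{2/3})}$) and then the change of variables $\ux=\lambda^{1/3}\ux'$, $\uy=\lambda^{1/3}\uy'$ in the outer integrals, the Jacobian $\lambda^{n/3}\cdot\lambda^{n/3}$ from $\dd\ux\,\dd\uy$ cancels against the $\lambda^{-n/3}$ from each of the two bridge rescalings, so numerator and denominator both transform into the corresponding objects at parameters $(a\lambda,\lambda)$ and horizontal span $T$, leaving the ratio $\bbP_{n,T}[F(\uY)\,|\,a\lambda,\lambda]$. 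I do not anticipate a genuine obstacle here — everything is a bookkeeping exercise in Gaussian scaling — so the only real care needed is to be consistent about which variable is being integrated and to make sure the Jacobian factors from the unnormalized bridge measures and from the $\dd\ux\,\dd\uy$ integrations are accounted for exactly once each; the $\lambda^{-n/3}$ in \eqref{eq:BScale-1} is the diagnostic that this has been done correctly.
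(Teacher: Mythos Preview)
Your proposal is correct and follows essentially the same route as the paper: both arguments use the Brownian-bridge scaling $\uX\mapsto\lambda^{-1/3}\uX(\lambda^{2/3}\cdot)$, check that the positivity/ordering constraint is scale-invariant and that the area picks up a factor $\lambda$, and then read off \eqref{eq:BScale-1}; the paper phrases this via the normalized bridge $\Gamma^{\ux,\uy}_T$ while you work with the unnormalized ${\mathbf B}^{\ux,\uy}_T$ and its explicit Gaussian mass, but the content is identical. One small bookkeeping slip in your last paragraph: the Jacobian $\lambda^{2n/3}$ from $\dd\ux\,\dd\uy$ does not fully cancel the single $\lambda^{-n/3}$ coming from the bridge rescaling, so numerator and denominator each carry a residual factor $\lambda^{n/3}$, which then cancels in the ratio---your conclusion is unaffected.
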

\begin{proof}
For any $\gamma>0$,  
consider the map $\Omega_{n,\gamma T}^+\mapsto\Omega_{n,T}^+$ defined by
\[
\uX^{(\gamma)}(t) = \gamma^{-\frac12}\uX(\gamma t)\,,\qquad t\in[-T,T].
\] 
The normalized Brownian bridge ensemble  
\[
\Gamma^{\ux,\uy}_T = \frac{{\mathbf B}^{\ux , \uy}_{T} }{{\mathbf B}^{\ux , \uy}_{T} (1)},
\]
has the Brownian scaling property:
\be{eq:Bscale-0}
\Gamma^{\ux,\uy}_{\gamma T}[G(\uX^{(\gamma)})] = \Gamma^{\gamma^{-\frac12}\ux,\gamma^{-\frac12}\uy}_{ T}[G(\uX)]\,,\qquad \gamma>0,
\ee
where $G$ is any bounded measurable function on $\Omega_{n,T}^+$.
We apply \eqref{eq:Bscale-0} with $$G(\uX)= \1_{\Omega_{n, T}^+}(\uX)
\,{\rm e}^{-\sum_1^n a\lambda^{i-1}\calA_{T} (X_i )},$$ and $\gamma=\lambda^{2/3}$. Since, 
{the area under the path scales as}
$$
\frac1{\lambda}\int_{-T\lambda^{2/3}}^{T\lambda^{2/3}}X_i(t)dt=\int_{-T}^T\frac{X_i(t\lambda^{2/3})}{\lambda^{1/3}}dt,
$$
this yields \eqref{eq:BScale-1}. The proof of \eqref{eq:BScale-11} is similar, with $$ G(\uX)= F(\uX)\1_{\Omega_{n, T}^+}(\uX)
\,{\rm e}^{-\sum_1^n a\lambda^{i-1}\calA_{T} (X_i )}.$$
\end{proof}
\smallskip 

\subsubsection{\bf A general class of polymers with area tilts.}
\label{sub:pmeas}
Let us say that two continuous\footnote{Here and below the assumption of continuity is for convenience.}
functions $f$ and $g$ on $[-T, T]$ satisfy $f\prec g$ if $ f (t )\leq g (t )$ 
for any $t\in [-T , T]$. By construction, 
if $\uX\in \Omega_{n, T }^+$, then $0\prec X_n\prec  X_{n-1} \prec \dots \prec X_1$. For every $n\in \bbN$ 
and $T>0$, let us consider the following 
general class $\bbP_{n, T}^{\ux , \uy}\left[\, \cdot | h_- , h_+, \urho\right]$ of
polymer measures which is parametrized by: 
\begin{description} 
 \item[a] Boundary conditions 
 $\ux, \uy \in \bbA_n^+$. 
 \item[b] Two non-negative continuous functions $h_-\prec h_+$ on $[ -T , T]$, which are called 
 the floor $h_-$ and the ceiling $h_+$. 
 \item[c] A tuple of $n$ (not necessarily ordered) positive 
 continuous functions  $\urho = \lbr \rho_1, \dots , \rho_n\rbr$
 which are called area tilts. 
\end{description}
Then, setting 
 \be{eq:Om-h} 
\Omega_{n, T }^{\uh} = \Omega_{n, T }^+\cap \lbr  h_- \prec X_n \prec X_1 \prec h_+\rbr, 
\ee 
define:
 \be{eq:mu-meas} 
 \bbP_{n, T}^{\ux , \uy}\left[\dd\uX  | h_- , h_+, \urho\right] \,
 \propto \, 
 {\rm e}^{- \sum_1^n \calA_T^{\rho_i} (X_i )} \1_{\Omega_{n, T }^{\uh}}
 {\mathbf B}_T^{\ux , \uy }\lb \dd \uX \rb .
 \ee
The corresponding partition function is denoted 
$Z_{n, T}^{\ux , \uy}( h_- , h_+, \urho )$. 

There is a  version of \eqref{eq:mu-meas} which permits  more general 
boundary conditions: Let $\unu$ and $\ueta$ be $n$-tuples of 
 functions on $\bbR_+$. 
For $\ux\in \bbA_n^+$ set $\unu (\ux ) =\sum_1^n \nu_i (x_i )$, and $\ueta (\ux ) =\sum_1^n \eta_i (x_i )$. 
Similarly, set $\calA_T^{\urho} (\uX ) = \sum_1^n \calA_T^{\rho_i} (X_i )$. 
Then, 
\be{eq:mu-meas-bc} 
 \bbP_{n, T}^{\unu , \ueta}\left[\dd\uX  | h_- , h_+, \urho\right] \,
 \propto \, 
 \int_{\bbA_n^+} \int_{\bbA_n^+} {\rm e}^{- \calA_T^{\urho} (\uX )} \1_{\Omega_{n, T }^{\uh}}\lb \uX\rb 
 {\rm e}^{-\unu (\ux )}{\mathbf B}_T^{\ux , \uy }\lb \dd \uX \rb{\rm e}^{-\ueta (\uy )}\dd\ux \dd \uy .
 \ee
The corresponding partition function is denoted 
$\calZ_{n, T}^{\ueta , \unu}( h_- , h_+, \urho )$. 
{This notation could be made formally compatible with \eqref{eq:PF-BC-T} as follows:  if we define 
	$\delta_x (y ) = \infty\1_{y\neq x}$, and,  for a 
	tuple $\ux$ define the $n$-tuple 
	$\udelta_{\ux} = \lbr \delta_{x_1}, \dots ,\delta_{x_n}\rbr$, then 
	$Z_{n, T}^{\ux , \uy}( h_- , h_+, \urho ) = 
	\calZ_{n, T}^{\udelta_{\ux} , \udelta_{\uy}}( h_- , h_+, \urho ). 
	$}

\bigskip

\noindent
{\bf Reduced notation.}
In the sequel we shall, unless this creates a confusion, employ the following reduced
notation:
If $\ueta,\unu$ are identically zero, 
we shall drop them  from the notation. We refer to this as the case of free (or empty) boundary conditions.  Similarly we shall drop from the notation the floor $h_-$ 
whenever $h_- \equiv 0$ and the ceiling $h_+$ whenever $h_+\equiv \infty$. 
Furthermore, we shall write $a, \lambda$ instead of $\urho$ whenever $\urho = \lbr a, a\lambda , \dots ,a\lambda^{n-1}\rbr$.
Finally we shall drop $n$
whenever talking about just one polymer, $n=1$. In this way we make new notation compatible 
with \eqref{eq:PolMeas}. 

Below we shall tacitly assume that boundary conditions $\unu , \ueta$ are chosen in such a 
way that the corresponding polymer measures are well defined. 
In Appendix~\ref{sub:Exists}  this assumption will be justified for a class of boundary conditions 
$\unu, \ueta$, including free
boundary conditions.

\subsubsection{\bf Stochastic domination.} 
\label{sub:SD} 
Equip $\Omega_{n, T }^{+}$ with the partial  order $\prec $, defined by $$\uX\prec \uY\;\;\text{iff}\;\; X_i\prec Y_i\,,\;\text{for all }\,i,$$ and let $\fkg $ denote the associated notion of stochastic domination of probability measures. 
\begin{lemma}\label{lem:SD}
For any $n$ and $T$, $h_-\prec g_-$, $h_+ \prec g_+$, $\urho\succ \ukappa$, the following holds. 
If,  $\ux\prec \uu$ and $\uy\prec \uv$, then 
\be{eq:FKG-2} 
\bbP_{n, T}^{\ux, \uy}\left[\cdot   | h_- , h_+, \urho\right]\fkg 
\bbP_{n, T}^{\uu , \uv}\left[\cdot  | g_- , g_+, \ukappa \right].
\ee 
Moreover, for an $n$-tuple $\uchi = \lbr \chi_1 ,\dots ,\chi_n\rbr$ of smooth boundary condition let $\uchi^\prime$ be the $n$-tuple of corresponding first derivatives. Then,
\be{eq:FKG-1} 
\bbP_{n, T}^{\uxi , \uzeta}\left[\cdot   | h_- , h_+, \urho\right]\fkg 
\bbP_{n, T}^{\unu , \ueta}\left[\cdot  | g_- , g_+, \ukappa \right], 
\ee 
whenever, 
 $h_-\prec g_-$, $h_+ \prec g^+$, $\urho\succ \ukappa$ and, 
  both $\underline\xi'
  \succ \unu'$ and $\underline\zeta'
  \succ \ueta'$. 
   In particular, \eqref{eq:FKG-1} holds if 
$\uxi = \unu$ and $\uzeta = \ueta$ (by approximation without any assumptions on smoothness).
\end{lemma}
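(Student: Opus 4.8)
The statement is a monotonicity (FKG/stochastic-domination) result for the polymer measures in \eqref{eq:mu-meas} and \eqref{eq:mu-meas-bc}. The natural route is the Holley criterion (or its continuous-path analogue via the FKG lattice condition): both measures live, up to restriction of support, on the same path space $\Omega_{n,T}^+$, and I want to build a monotone coupling. The plan is to first reduce \eqref{eq:FKG-2} to a statement about a single pair of measures on a common reference space, then deduce \eqref{eq:FKG-1} from \eqref{eq:FKG-2} by integrating over the boundary endpoints, and finally obtain the smooth-boundary version by a Girsanov/tilt computation that converts $\underline\nu,\underline\eta$ into extra area-type tilts.

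\emph{Step 1: the core domination on a fixed path space.} I would discretize time (replace $[-T,T]$ by a fine mesh) so that the Brownian-bridge ensemble becomes a finite-dimensional Gaussian-type density on $(\bbR^n)^{\#\text{mesh}}$, with the indicator $\1_{\Omega_{n,T}^{\uh}}$ and the factor $\exp(-\sum_i\calA_T^{\rho_i}(X_i))$ as multiplicative weights. On this finite lattice, equipped with the coordinatewise order, the increment density of Brownian motion is log-concave and hence satisfies the FKG lattice condition; the indicator of $\{h_-\prec X_n\prec\cdots\prec X_1\prec h_+\}$ is an indicator of a sublattice (it is preserved by coordinatewise $\min$ and $\max$), and $\exp(-\sum_i\int\rho_i X_i)$ is log-modular (actually a product over coordinates), so the whole weighted measure is FKG. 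Then the two hypotheses $h_-\prec g_-$, $h_+\prec g_+$, $\urho\succ\ukappa$, $\ux\prec\uu$, $\uy\prec\uv$ are exactly the conditions under which the Holley criterion gives $\mu_1\fkg\mu_2$: raising the floor, lowering the ceiling, decreasing the tilts, and raising the pinned endpoints each push the measure up in $\prec$. Passing to the limit along refining meshes preserves stochastic domination (weak convergence of the finite-dimensional distributions plus tightness, using that both sides are supported on continuous paths confined between $h_-$ and $h_+$ in the relevant regime, or more carefully by Skorokhod embedding), yielding \eqref{eq:FKG-2}.

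\emph{Step 2: from pinned to integrated boundary conditions.} For \eqref{eq:FKG-1}, write $\bbP_{n,T}^{\unu,\ueta}[\,\cdot\mid h_-,h_+,\ukappa]$ as a mixture over $(\ux,\uy)\in\bbA_n^+\times\bbA_n^+$ of the pinned measures $\bbP_{n,T}^{\ux,\uy}[\,\cdot\mid h_-,h_+,\ukappa]$ with mixing density proportional to $e^{-\unu(\ux)-\ueta(\uy)}Z_{n,T}^{\ux,\uy}(h_-,h_+,\ukappa)\,\1_{\bbA_n^+}(\ux)\1_{\bbA_n^+}(\uy)$, and similarly for the $\underline\xi,\underline\zeta$ side. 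Since, by Step 1, $\ux\prec\uu$ and $\uy\prec\uv$ already give domination of the pinned measures, it suffices to show that the endpoint-mixing measure on the $\underline\xi,\underline\zeta$ side stochastically dominates (in $\prec$ on $\bbA_n^+\times\bbA_n^+$) the one on the $\unu,\ueta$ side. The hypothesis on derivatives $\underline\xi'\succ\unu'$, $\underline\zeta'\succ\ueta'$ is precisely what makes this work: it implies that the Radon--Nikodym ratio of the two mixing densities, namely $\exp\big(\sum_i[\nu_i(x_i)-\xi_i(x_i)]+[\eta_i(y_i)-\zeta_i(y_i)]\big)$ times a common factor, is coordinatewise nondecreasing in $(\ux,\uy)$ --- because $\nu_i-\xi_i$ has nonpositive derivative, hence each summand is nonincreasing... wait, one must orient the inequality so that the \emph{smaller-tilt / larger-drift-toward-the-top} side wins; tracking signs carefully, $\underline\xi'\succ\underline\nu'$ means the $\xi$-boundary penalizes large endpoints less as one moves up, which is the right direction. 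Combined with the FKG property of the base endpoint-mixing measure (itself inherited from Step 1 applied with pinned measures, since $Z_{n,T}^{\ux,\uy}$ is a log-supermodular function of $(\ux,\uy)$ --- this is again an FKG consequence and should be stated as an auxiliary lemma), the ratio-monotone reweighting preserves and improves stochastic order, giving \eqref{eq:FKG-1}.

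\emph{Step 3: the zero-smoothness case and the hardest point.} The special case $\uxi=\unu$, $\uzeta=\ueta$ is obtained by approximating arbitrary (continuous, say) $\nu_i,\eta_i$ from below by smooth functions whose derivatives one can compare, or more simply by noting that the derivative condition is vacuous when the two boundary conditions coincide, so only $h_-\prec g_-$, $h_+\prec g_+$, $\urho\succ\ukappa$ are needed and Step 2 applies verbatim with $\underline\xi'=\underline\nu'$; a routine limiting argument removes the smoothness assumption, using that the partition functions and the measures depend continuously on the boundary data under, e.g., uniform convergence on compacts together with the integrability hypotheses guaranteeing the measures are well defined (as discussed around Appendix~\ref{sub:Exists}). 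I expect the main obstacle to be Step 1 done cleanly: verifying the FKG lattice condition for the confined, area-tilted Brownian-bridge ensemble and, crucially, justifying that stochastic domination survives the continuum limit when the ceiling $h_+$ may be $+\infty$ and the endpoints are pinned (so the discretized densities are genuinely singular as the mesh refines). The cleanest fix is probably to prove domination first for finite $h_+$ and bounded tilts and then remove those restrictions by monotone limits --- raising $h_+\uparrow\infty$ and adjusting $\urho$ --- each of which is itself an instance of \eqref{eq:FKG-2}, so the argument bootstraps.
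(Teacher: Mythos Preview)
Your approach is essentially correct but genuinely different from the paper's. Both arguments discretize, but the paper follows the Corwin--Hammond template: it replaces the Brownian ensemble by an ordered simple-random-walk ensemble, builds an explicit \emph{monotone heat-bath Markov chain coupling} (single-site Glauber updates with acceptance probabilities determined by the tilts), verifies that each update preserves the coordinatewise order, and then passes to the continuum via the invariance principle. For the integrated boundary conditions \eqref{eq:FKG-1}, the paper does \emph{not} write the measure as a mixture over endpoints; instead it enlarges the Markov chain so that the endpoint heights $W_i(\pm T_N)$ are themselves updated, with acceptance probabilities involving the discrete gradients $\nabla\xi_i,\nabla\nu_i$. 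The derivative hypothesis $\xi_i'\ge\nu_i'$ then enters very directly: it guarantees that the paired endpoint update preserves the order.

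Your route---Holley/FKG on a discretized space, then a mixture argument for the endpoints---is a valid alternative, but a few points need tightening. First, in Step~2 you call the partition-function ratio a ``common factor''; it is not constant in $(\ux,\uy)$ when the two sides have different floors, ceilings, or tilts, and you must actually show that
\[
(\ux,\uy)\ \longmapsto\ \frac{Z_{n,T}^{\ux,\uy}(g_-,g_+,\ukappa)}{Z_{n,T}^{\ux,\uy}(h_-,h_+,\urho)}
\]
is nondecreasing. This follows from Step~1 (each factor in a telescoping decomposition is the $\bbP^{\ux,\uy}$-expectation of a monotone functional, hence monotone in the endpoints by \eqref{eq:FKG-2}), but it deserves an explicit argument, together with the log-supermodularity of $Z^{\ux,\uy}$ that you flag. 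Second, your sign discussion in Step~2 is garbled: $\xi_i'\ge\nu_i'$ means $\xi_i-\nu_i$ is nondecreasing, so $e^{-\xi_i}/e^{-\nu_i}$ is nonincreasing, i.e.\ the $\uxi$-side penalizes large endpoints \emph{more}; this is the correct direction for the $\uxi$-measure to be stochastically \emph{smaller}, but the sentence you wrote says the opposite. The comparison with the paper: the Markov-chain coupling is more hands-on and treats the boundary conditions on the same footing as interior sites, avoiding the separate endpoint-mixing analysis and the auxiliary monotonicity/log-supermodularity lemmas your route needs.
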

The proof of Lemma \ref{lem:SD} is given in Section \ref{sub:SDproof} below.

\subsubsection{\bf A recursion.}\label{subsec:recursion} 
Consider our  polymer measure $\bbP_{n+1, T}
\left[ \cdot ~|a, \lambda  \right]$. 
Let us record $\uX = (X_1 , \tilde \uX)$, where $\tilde \uX = ( X_2 , \dots , X_{n+1})$. 
It is easy to see that the conditional, on $X_1$, distribution of $\tilde\uX$ is precisely 
$\bbP_{n, T}\left[\cdot   | X_1 ,   a\lambda , \lambda\right]$ with ceiling
$X_1$. 
By \eqref{eq:FKG-1} it is stochastically dominated by 
$\bbP_{n, T}\left[\cdot   |   a\lambda ,\lambda\right]$. 
Therefore,  if 
$F$ is  a non-decreasing function on $\Omega_{n,T}^+$, 
\be{eq:FKG-n-bot} 
\bbE_{n+1, T}\left[ F (\tilde{\uX} )  |  a, \lambda \right]
 \leq 
 \bbE_{n, T}\left[ F ({\uX} )  |  a\lambda, \lambda\right]. 
\ee
Let $f$ be a non-decreasing functional of $X_1$. Then, conditioning on $X_2$,
\be{eq:dom1} 
\bbE_{n+1, T}\left[ f (X_1 )  \, \big|\, X_2 \,|\, a, \lambda \right] = \bbE_T\left[   f (X) \big|\, X_2 , a\right] 
\ee
is, by \eqref{eq:FKG-1},  a non-decreasing functional of the floor $X_2$. 
Applying  \eqref{eq:FKG-n-bot} with $F(\tilde \uX)=\bbE_T\left[   f (X ) \big|\, X_2 , a\right] $ and 
then using the scaling 
property  
\eqref{eq:BScale-11}, 
this means: 
\be{eq:FKG-reduction} 
\begin{split}
\bbE_{n+1,T} \left[  f (X_1 ) |a, \lambda \right] &\leq \bbE_{n,T}\left[ 
\bbE_T \left[  f (X ) \big|\, X_1, a \right]\big| a\lambda , \lambda \right] \\
&\stackrel{\eqref{eq:BScale-11} }{=} 
\bbE_{n,T\lambda^{2/3}}\left[ 
\bbE_T \left[  f (X ) \big|\, \frac{X_1 (\lambda^{2/3}\cdot )}{\lambda^{1/3}}, a \right]\big| a , \lambda \right]. 
\end{split}
\ee
Inequality \eqref{eq:FKG-reduction} sets up the stage for various recursions which eventually 
lead to tightness statements like the one formulated in Theorem~\ref{thm:main}. In particular, 
a natural choice of $f (X) = \max_{t\in [-T, T]} X (t )$ in \eqref{eq:FKG-reduction} leads to  a 
proof of tightness (in $n$) of the maxima  for each $T$ fixed, but clearly is not suitable for proving tightness
uniformly in $T$. In order to prove Theorem~\ref{thm:main} we introduce a kind of {\em curved} maximum, 
and use \eqref{eq:FKG-reduction} to control it uniformly in $n$ and $T$. Both the usual and the curved maxima
are discussed in the  subsequent sections. 

{
\begin{remark}
	\label{rem:domination}
Let us also remark that the above reasoning can be used to control the height of the $k$-th path $X_k$ in terms of the top path $X_1$. Indeed, as in \eqref{eq:FKG-n-bot} one has that $X_{k+1}$ under $\bbP_{n, T}\left[ \cdot |  a, \lambda \right]$ is stochastically dominated by $X_{k}$ under $\bbP_{n-1, T}\left[ \cdot |  a\lambda, \lambda \right]$.
Iterating, it follows that  $X_{k+1}$ under $\bbP_{n, T}\left[ \cdot |  a, \lambda \right]$ is stochastically dominated by $X_{1}$ under $\bbP_{n-k, T}\left[ \cdot |  a\lambda^k, \lambda \right]$. By \eqref{eq:BScale-11} therefore $X_{k+1}$ under $\bbP_{n, T}\left[ \cdot |  a, \lambda \right]$ is stochastically dominated by $\lambda^{-k/3}X_1$ where $X_1$ has law $\bbP_{n-k, T\lambda^{2k/3}}\left[ \cdot |  a, \lambda \right]$.
\end{remark}
}

In implementing the recursions, we shall repeatedly use the following easily verified fact, which crucially depends on the linear structure 
of area tilts: For any number $\xi > 0$, any tilt $\rho$ and any floor $h$, 
the distribution of $(Y- \xi)$, when $Y$ is distributed according to $\bbP_T\left[\cdot |\xi+ h, \rho\right]$, is the same as the distribution 
of $Y$ under $\bbP_T\left[\cdot | h, \rho\right]$.

\section{Tightness of maxima.} \label{sec:maxima}
In this section we shall prove the following proposition, 
which can be considered as a warm-up towards the much stronger 
statement of Theorem~\blue{\ref{thm:max-control}} below: 
\begin{proposition} 
 \label{prop:main} 
For any $a>0$, $\lambda >1$ and $T$ fixed, there exists a constant $C(a,\lambda,T)$ such that
\be{eq:prop-main} 
\lim_{n\to\infty} \,\bbE_{n, T}\left[ \max_{t\in[-T , T]} X_1 (t )\big|\, a, \lambda\right]= C(a,\lambda,T)<\infty .
\ee
\end{proposition}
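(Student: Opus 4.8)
The plan is to use the recursion \eqref{eq:FKG-reduction} with the choice $f(X)=\max_{t\in[-T,T]}X(t)$ together with the one-line ``shift'' fact recorded at the end of Section~1: if $Y\sim\bbP_T[\cdot\,|\,\xi+h,\rho]$ then $Y-\xi\sim\bbP_T[\cdot\,|\,h,\rho]$. Concretely, applying \eqref{eq:FKG-reduction} to the nondecreasing functional $f$ of the top path and then, for the inner single-polymer expectation, conditioning the first line on the value of its floor (which after the outer step is a rescaled copy of the previous top path), one obtains a bound of the form
\be{eq:plan-rec}
M_{n+1,T}\leq \bbE_{n,T\lambda^{2/3}}\Bigl[\,M^{(1)}\bigl(\lambda^{-1/3}X_1(\lambda^{2/3}\cdot)\bigr)\,\bigm|\,a,\lambda\Bigr],
\ee
where $M_{n,T}=\bbE_{n,T}[\max_{[-T,T]}X_1\,|\,a,\lambda]$ and $M^{(1)}(g)=\bbE_T[\max_{[-T,T]}X\,|\,g,a]$ is the mean maximum of a single area-tilted polymer sitting above the floor $g$. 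The key monotonicity inputs --- that $M^{(1)}(g)$ is nondecreasing in $g$, and that raising the floor by a constant $\xi$ raises the mean maximum by exactly $\xi$ --- come respectively from \eqref{eq:FKG-1} and from the shift fact. So the whole analysis is reduced to understanding how $M^{(1)}$ transforms a floor.

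The heart of the argument is an a~priori linear-in-sup bound for the single polymer: there exist finite constants $c_1=c_1(a),c_2=c_2(a)$, \emph{independent of $T$}, such that for every continuous floor $g\succ 0$,
\be{eq:plan-onepoly}
\bbE_T\Bigl[\max_{t\in[-T,T]}X(t)\ \Bigm|\ g,a\Bigr]\ \leq\ \sup_{t\in[-T,T]}g(t)\ +\ c_1,
\ee
and more generally $\leq \theta\,\sup g + c_2$ with any $\theta\in(0,1)$ at the cost of enlarging $c_2$ is \emph{not} what one gets for free --- the honest statement is \eqref{eq:plan-onepoly} with slope $1$. To prove \eqref{eq:plan-onepoly} one may use \eqref{eq:FKG-1} to replace $g$ by the constant floor $\|g\|_\infty$, then use the shift fact to reduce to $g\equiv 0$, and finally bound $\bbE_T[\max_{[-T,T]}X\,|\,a]$ uniformly in $T$: this last bound is the genuinely analytic step and follows from the fact that a Brownian motion weighted by $e^{-a\int X}$ and conditioned to stay positive is, uniformly in the horizon, stochastically dominated by the stationary Ferrari--Spohn diffusion (alternatively by a direct first-moment computation over the partition function, splitting $[-T,T]$ into unit intervals and using that $\bbP_T[\max_{[k,k+1]}X\geq u\,|\,a]$ decays faster than any polynomial in $u$ uniformly in $k$ and $T$). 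One also needs the elementary remark that $\sup_t \lambda^{-1/3}X_1(\lambda^{2/3}t)=\lambda^{-1/3}\sup_t X_1(t)$, so rescaling the previous top path contracts its sup by the factor $\lambda^{-1/3}<1$.

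Feeding \eqref{eq:plan-onepoly} into \eqref{eq:plan-rec} and using the scaling remark gives
\be{eq:plan-close}
M_{n+1,T}\ \leq\ \lambda^{-1/3}\,\bbE_{n,T\lambda^{2/3}}\Bigl[\sup_{[-T\lambda^{2/3},T\lambda^{2/3}]}X_1\ \Bigm|\ a,\lambda\Bigr]\ +\ c_1\ =\ \lambda^{-1/3}\,M_{n,T\lambda^{2/3}}\ +\ c_1 .
\ee
Thus $\sup_T M_{n+1,T}\leq \lambda^{-1/3}\sup_T M_{n,T}+c_1$; since $\lambda^{-1/3}<1$ this contraction gives $\sup_{n,T}M_{n,T}\leq c_1/(1-\lambda^{-1/3})<\infty$, which is even stronger than the assertion for fixed $T$. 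Finally, to upgrade the uniform bound to the stated limit, observe that $n\mapsto M_{n,T}$ is nondecreasing for fixed $T$: adding a bottom line only lowers the ceiling-free top line's floor structure --- more precisely, $X_1$ under $\bbP_{n,T}[\cdot\,|\,a,\lambda]$ is, by conditioning on $X_2$ and using \eqref{eq:FKG-1}, stochastically dominated from below by $X_1$ under $\bbP_{n-1,T}[\cdot\,|\,a,\lambda]$ (the $(n-1)$-polymer top line has a lower floor $X_2\equiv$ the old top line, hence sits lower; equivalently one shows monotonicity directly from \eqref{eq:FKG-1} by comparing the $n$- and $(n+1)$-tuples). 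A bounded monotone sequence converges, giving \eqref{eq:prop-main} with $C(a,\lambda,T)=\lim_n M_{n,T}\leq c_1/(1-\lambda^{-1/3})$.

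\textbf{Main obstacle.} The delicate point is the $T$-uniform single-polymer estimate behind \eqref{eq:plan-onepoly}: one must control $\bbE_T[\max_{[-T,T]}X\,|\,a]$ uniformly as $T\to\infty$, i.e.\ show the area tilt confines the polymer on \emph{all} scales of the horizon simultaneously, not just pointwise. This is exactly the Ferrari--Spohn confinement phenomenon, and handling it rigorously --- either via stochastic domination by the stationary diffusion, or via a union bound over $O(T)$ unit-length windows with stretched-exponential tail bounds on the maximum in each window that are uniform in the window's location --- is the technical crux; everything else is bookkeeping with the FKG inequalities of Lemma~\ref{lem:SD}, the scaling Lemma~\ref{lem:scaling}, and the constant-shift identity.
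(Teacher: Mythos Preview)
Your recursion \eqref{eq:plan-close} is correct and is essentially the paper's recursion \eqref{eq:maxnT3} in rescaled form: the paper stays on the first line of \eqref{eq:FKG-reduction} and obtains $M_{n+1,T}(a,\lambda)\le M_T(a)+M_{n,T}(a\lambda,\lambda)$, which via the scaling identity \eqref{eq:maxnT4} is the same inequality you wrote. The monotonicity-in-$n$ step is also fine.

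The genuine gap is your claim that $c_1=\sup_T\bbE_T\bigl[\max_{[-T,T]}X\,\big|\,a\bigr]<\infty$. This is false. Even granting stochastic domination by the stationary Ferrari--Spohn diffusion, the maximum over $[-T,T]$ of a stationary diffusion with exponential-type marginal tails grows like $\log T$; the paper in fact records $M_T(1)\le C\log(1+T)$ in \eqref{eq:maxnT6}, and this is the right order. Your own proposed union bound over $O(T)$ unit windows with uniform tail decay produces exactly this logarithm, not a $T$-free constant. As a consequence your ``even stronger'' conclusion $\sup_{n,T}M_{n,T}<\infty$ is itself false, since already $M_{1,T}=M_T(a)\to\infty$. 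This is precisely why the paper introduces the curved maximum $\xi_\varphi^T$ in the next section: the flat maximum cannot be controlled uniformly in $T$.

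The repair, for the fixed-$T$ statement of the proposition, is to keep your recursion but feed in the correct single-polymer growth. Iterating $M_{n+1,T}\le\lambda^{-1/3}M_{n,T\lambda^{2/3}}+M_T(a)$ gives
\[
M_{n+1,T}\ \le\ \sum_{k=0}^{n}\lambda^{-k/3}\,M_{T\lambda^{2k/3}}(a),
\]
and any bound $M_S(a)\le C S^\alpha$ with $\alpha<\tfrac12$ (in particular the logarithmic one) makes this series converge uniformly in $n$ for each fixed $T$. After scaling, this is exactly the paper's sum $\sum_k M_T(a\lambda^k)$ in \eqref{eq:maxnT3b}, closed via \eqref{eq:maxnT5}--\eqref{eq:maxnT6}.
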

\begin{proof}
Define
\be{eq:maxnT} 
M_{n,T}(a,\lambda) \df \bbE_{n, T}\left[ \max_{t\in[-T , T]} X_1 (t )\big|\, a, \lambda\right]. 
\ee
When $n=1$, we simply write $M_T(a)$ for $M_{1,T}(a,\lambda)$. Clearly, $ M_T(a)<\infty$ for all $a,T>0$. 
As in the first line of \eqref{eq:FKG-reduction} we obtain
\be{eq:maxnT2} 
M_{n+1,T}(a,\lambda) \leq \bbE_{n, T}\left[ \bbE_{T}\left[\max_{t\in[-T , T]} X (t )\big|\,X_1,a\right]\big|\, a\lambda, \lambda\right]. 
\ee
Using stochastic domination and the remark at the end of Section \ref{subsec:recursion}, we may replace the floor  $X_1$ by the constant floor $\xi=\max_{t\in[-T , T]} X_1(t)$ to obtain
\[
\bbE_{T}\left[\max_{t\in[-T , T]} X (t )\big|\,X_1,a\right]
\leq M_T(a)+\max_{t\in[-T , T]} X_1(t ). 
\]
Thus \eqref{eq:maxnT2}  implies the recursive estimate
\be{eq:maxnT3} 
M_{n+1,T}(a,\lambda) \leq  M_T(a)+ M_{n,T}(a\lambda,\lambda). 
\ee
Iterating, for any $n\in\bbN$:
\be{eq:maxnT3a} 
M_{n,T}(a,\lambda) \leq  \sum_{k=0}^{n-1}
M_T(a\lambda^k). 
\ee
Stochastic domination implies also that the sequence $M_{n,T}(a,\lambda)$ is monotone in $n$ and therefore
\be{eq:maxnT3b} 
\lim_{n\to\infty} M_{n,T}(a,\lambda)=\sup_{n\in\bbN} M_{n,T}(a,\lambda)\leq  \sum_{k=0}^\infty
M_T(a\lambda^k). 
\ee
From the scaling relation \eqref{eq:BScale-11}:
\be{eq:maxnT4} 
M_T(b) = \frac1{b^{1/3}} M_{Tb^{2/3}}(1)\,,\qquad b>0.
\ee
From \eqref{eq:maxnT4} it follows that the sum in \eqref{eq:maxnT3b} is finite if e.g.\ 
\be{eq:maxnT5} 
M_T(1) \leq C T^\alpha\,,
\ee
for some constants $C>0,\alpha\in(0,\tfrac12)$, for all $T\geq 1$. The bound \eqref{eq:maxnT5} 
can be derived  from the explicit representation \eqref{eq:Zt-expAiry} for the partition functions. 
Since we prove much stronger estimates in the next section we omit the details here. Notice in particular that 
the argument for the estimate \eqref{tail_4} below actually allows us to prove that 
\be{eq:maxnT6} 
M_T(1) \leq C \log (1+T),
\ee
for all $T\geq 1$. 
\end{proof}

\section{Curved maximum and Uniform tightness.} 
Let us start with explaining our notion of curved maxima. 
Let $\varphi (t ) = \abs{t}^\alpha$ with $\alpha \in (0, \frac{1}{2} )$. 
Given a continuous function $h$ on $[-T , T]$ define (see Figure~\ref{fig:a}) 
\be{eq:xi-func} 
\xi^T_{\varphi} ( h) = \min\lbr y\geq 0\,  :\, y+\varphi \succ h\rbr = \max_{t\in[-T,T]} \left[ h (t ) - \varphi (t )\right]_+. 
\ee
Informally, $\xi^T_{\varphi} ( h)$ is the minimal amount to lift $\varphi$ so that 
it will stay above $h$. We think of $\xi^T_{\varphi} ( h)$ in terms of the curved maximum of $h$ on
$[ -T, T]$. 

\begin{figure}[h]
\begin{overpic}[scale=0.6]{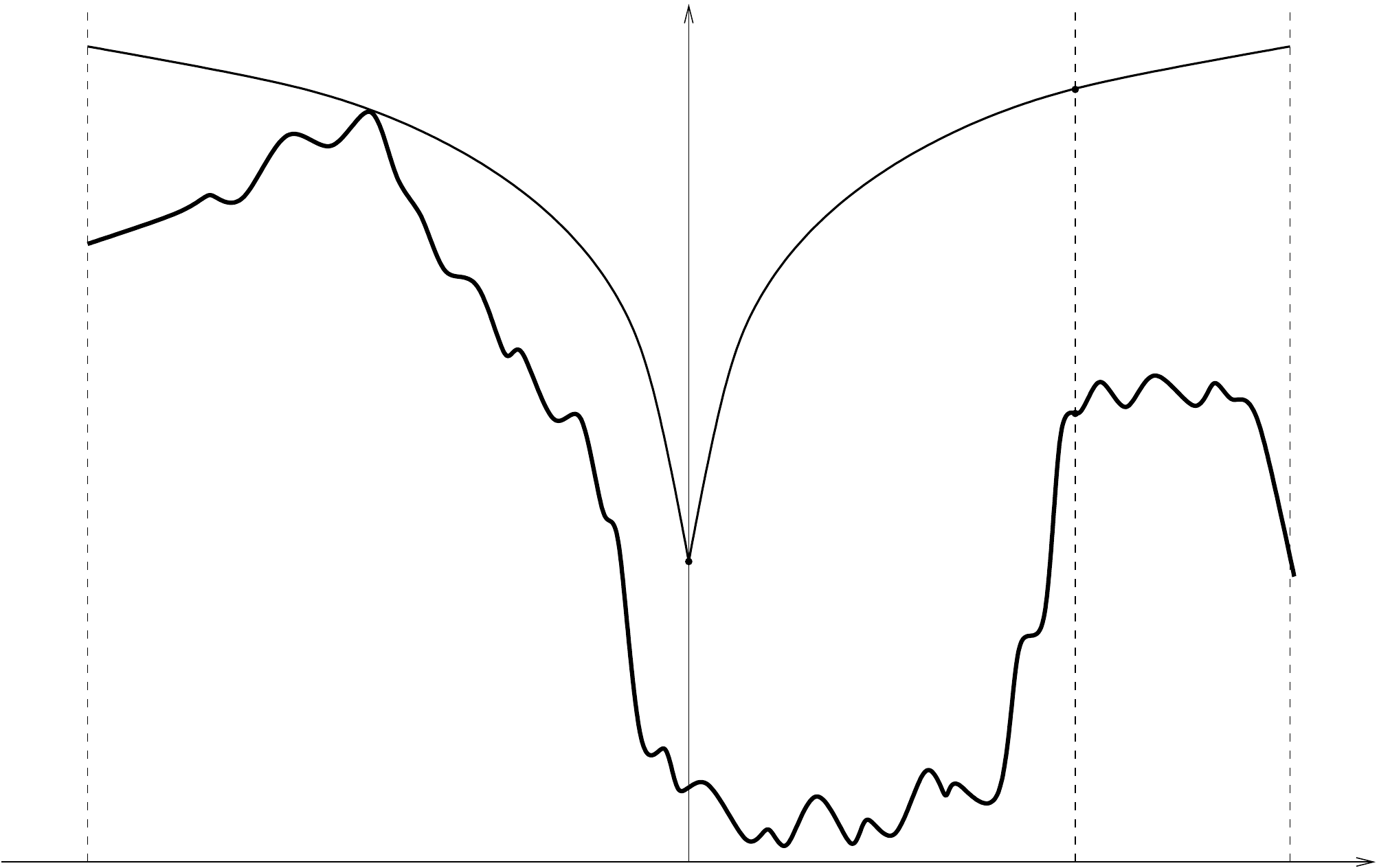}
\put(51,21){\scalebox{1}{$\xi^T_\varphi(h)$}}
\put(62,57.5){\scalebox{.9}{$\xi^T_\varphi(h)+\varphi(t)$}}
\put(72,34){\scalebox{1}{$h(t)$}}
\put(77.5,-3){\scalebox{1}{$t$}}
\put(92.5,-3){\scalebox{1}{$T$}}
\put(2.4,-3){\scalebox{1}{$-T$}}
\end{overpic}
\label{fig:a}
\caption{ The curved maximum $\xi_{\varphi}$.} 
\vspace{-0.25cm}
\end{figure}

%

Theorem~\ref{thm:main} is an immediate consequence of the following result:
\begin{theorem} 
\label{thm:max-control}
For any $a>0,\lambda>1$ and $\alpha\in(0,\tfrac12)$:
\be{eq:max-control}
 \sup_{T}\max_n \bbE_{n, T} \left[  \xi^T_{\varphi} ( X_1 )| a, \lambda\right]  <\infty .
 \ee
\end{theorem}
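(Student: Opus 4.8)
The plan is to run the recursion \eqref{eq:FKG-reduction} with the curved-maximum functional $f(X)=\xi^{T}_{\varphi}(X)$ and to bound the single-polymer contribution that appears at each step. Observe first that $f(X)=\xi^T_\varphi(X)=\max_{t}[X(t)-\varphi(t)]_+$ is a non-decreasing functional of the path, so \eqref{eq:FKG-reduction} applies. Moreover, by the ``shift'' fact recorded at the end of Section~\ref{subsec:recursion} together with stochastic domination (Lemma~\ref{lem:SD}), conditioning $X$ on having floor $X_1$ and then replacing this floor by the curved envelope $\xi^{T}_{\varphi}(X_1)+\varphi$ (which dominates $X_1$ pointwise), one obtains the pointwise bound
\be{eq:curved-reduction}
\bbE_T\left[\xi^T_\varphi(X)\,\big|\,X_1,a\right]\ \leq\ \xi^T_\varphi(X_1)+\bbE_T\left[\xi^T_\varphi(X)\,\big|\,\varphi,a\right],
\ee
where on the right $X$ has law $\bbP_T[\cdot\,|\,\varphi,a]$, i.e.\ a single area-tilted polymer with floor $\varphi$ (and no ceiling). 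Plugging this into \eqref{eq:FKG-reduction}, writing $m_n(a,\lambda,T)\df\sup_n\bbE_{n,T}[\xi^T_\varphi(X_1)\,|\,a,\lambda]$ and unwinding the $\lambda^{2/3}$ time-rescaling exactly as in the proof of Proposition~\ref{prop:main}, the recursion telescopes to
\be{eq:curved-sum}
m(a,\lambda,T)\ \leq\ \sum_{k=0}^{\infty}\, e_k,\qquad e_k\df \bbE_{T_k}\left[\xi^{T_k}_{\varphi_k}(X)\,\big|\,\varphi_k,a\lambda^{k}\right],
\ee
where $T_k=T\lambda^{2k/3}$ and $\varphi_k$ is the image of $\varphi$ under $k$ iterations of the Brownian scaling \eqref{eq:BrownScale} (so $\varphi_k(t)=\lambda^{-k/3}\varphi(\lambda^{2k/3}t)=\lambda^{(2\alpha-1)k/3}|t|^{\alpha}$, which is \emph{smaller} than $\varphi$ since $\alpha<\tfrac12$ --- this is the key gain). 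Thus everything reduces to a single-polymer estimate: I need to show that $e_k$ is summable in $k$, uniformly in $T$.

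\textbf{The single-polymer estimate.}
The heart of the matter is therefore to bound $\bbE_{T}[\xi^{T}_{\psi}(X)\,|\,\psi,b]$, i.e.\ the curved maximum of \emph{one} area-tilted polymer on $[-T,T]$ above a floor $\psi(t)=c|t|^{\alpha}$, with $b$ large. The strategy is a union bound combined with tail estimates for the single-polymer partition function: write $\{\xi^T_\psi(X)\geq u\}=\bigcup_{t}\{X(t)\geq u+\psi(t)\}$, chop $[-T,T]$ into unit (or dyadic) subintervals, and on each subinterval use the explicit Airy-function representation \eqref{eq:Zt-expAiry} for $Z_T$ to estimate $\bbP_T[X(t)\geq u+\psi(t)\,|\,\psi,b]$. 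I expect a bound of the shape $\bbP_T[X(t)\geq u\,|\,\psi,b]\leq C\,e^{-c\,b^{2/3}u}$ on order-one time windows, so that the floor being $\psi(t)$ at position $t$ contributes an extra factor $e^{-c\,b^{2/3}\psi(t)}$; integrating the union bound over the $\asymp T$ windows and then over $u\in(0,\infty)$ produces something like $\bbE_T[\xi^T_\psi(X)\,|\,\psi,b]\lesssim b^{-2/3}\log(1+T)$ up to constants (this is the improvement over \eqref{eq:maxnT5}--\eqref{eq:maxnT6} alluded to in the text, promoted to the curved setting via the extra $e^{-c b^{2/3}\psi(t)}$ damping at large $|t|$). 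Applying this with $b=a\lambda^k$, $\psi=\varphi_k$, $T$ replaced by $T_k=T\lambda^{2k/3}$ gives
$$
e_k\ \lesssim\ (a\lambda^k)^{-2/3}\log\!\big(1+T\lambda^{2k/3}\big)\ \lesssim\ \lambda^{-2k/3}\big(\log(1+T)+k\big),
$$
which is summable in $k$ with a bound uniform in $T$; substituting into \eqref{eq:curved-sum} finishes the proof. Tightness of the one-dimensional marginals, Theorem~\ref{thm:main}, follows since $X_1(0)\leq \xi^T_\varphi(X_1)+\varphi(0)=\xi^T_\varphi(X_1)$, so the uniformly bounded expectation of $\xi^T_\varphi(X_1)$ controls the marginal at $0$ (and, by the same argument applied after a time-shift, at any fixed point).

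\textbf{Main obstacle.}
The recursion, the telescoping, and the gain $\alpha<\tfrac12$ are essentially bookkeeping once \eqref{eq:curved-reduction} is in place; the genuine work --- and the step I expect to be hardest --- is the quantitative single-polymer tail estimate for the curved maximum above a power-law floor, uniformly in $T$ and in the (large) tilt $b$. One subtlety is that \eqref{eq:curved-reduction} throws away the ceiling $X_1$ and replaces it by $\infty$ and the floor by $\varphi$; this is legitimate by Lemma~\ref{lem:SD}, but it means the single-polymer bound must genuinely hold for the \emph{floored, unceiled} polymer, whose maximum is a priori only logarithmically (not $\sqrt T$-) large, and one must track how the non-trivial floor $\psi(t)=c|t|^\alpha$ improves the tail at large $|t|$ enough to beat the $\asymp T_k$ time-windows as $k\to\infty$. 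Getting the $b$-dependence right (the $b^{2/3}$ in the exponent, equivalently the $b^{-2/3}$ prefactor after integration) via the Airy representation \eqref{eq:Zt-expAiry}, and doing so with constants independent of $T$, is the crux; everything else is assembling the pieces already present in the excerpt.
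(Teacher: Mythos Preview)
Your recursion and the use of stochastic domination are on the right track, but the argument as written has a genuine gap that prevents it from giving a bound uniform in $T$.

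The problem is in your single-polymer reduction \eqref{eq:curved-reduction}: you replace the floor $X_1$ by the envelope $\xi^T_\varphi(X_1)+\varphi$, i.e.\ by the \emph{same} curve $\varphi$ that defines the curved maximum. After the constant shift this leaves you with $\bbE_T[\xi^T_\varphi(X)\mid \varphi,b]$, where floor and curve coincide. But then $\xi^T_\varphi(X)=\max_t\bigl(X(t)-\varphi(t)\bigr)$ is just the \emph{flat} maximum of $X-\varphi$ above $0$; after Girsanov straightening this is essentially $M_T(b)$ of Section~\ref{sec:maxima}, which by \eqref{eq:maxnT4}--\eqref{eq:maxnT6} scales like $b^{-1/3}\log(1+Tb^{2/3})$. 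There is no extra factor $e^{-cb^{2/3}\psi(t)}$ here: that damping only appears when the curve is \emph{strictly larger} than the floor, whereas in your $e_k$ they are equal. Consequently your own displayed bound $e_k\lesssim \lambda^{-2k/3}(\log(1+T)+k)$ (even granting the exponent) sums to $C_1\log(1+T)+C_2$, which diverges with $T$ --- precisely the failure mode the curved maximum was designed to avoid.

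The paper fixes this by extracting a \emph{contraction} instead of telescoping to a series. Working with the scaled form of the recursion (second line of \eqref{eq:FKG-reduction}) and bounding the rescaled floor via \eqref{eq:X1-bound}, one obtains
\[
\bbE_{n+1,T}\bigl[\xi^T_\varphi(X_1)\mid a,\lambda\bigr]\ \le\ \tfrac{1}{\lambda^{1/3}}\,\bbE_{n,T\lambda^{2/3}}\bigl[\xi^{T\lambda^{2/3}}_\varphi(X_1)\mid a,\lambda\bigr]\ +\ \bbE_T\bigl[\xi^T_\varphi(Y)\mid \varphi_\lambda,a\bigr],
\]
see \eqref{eq:OneStepBound-1}. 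Two things happen simultaneously: (i) the factor $\lambda^{-1/3}$ in front of the $n$-term makes $m_n=\sup_T\bbE_{n,T}[\xi^T_\varphi(X_1)\mid a,\lambda]$ satisfy $m_{n+1}\le\lambda^{-1/3}m_n+C$, so one needs only a \emph{single} constant $C$, not a summable series; and (ii) the floor in the additive term is $\varphi_\lambda=\lambda^{-\beta}\varphi$ with $\beta=(1-2\alpha)/3>0$, \emph{strictly below} the curve $\varphi$. After straightening one is left with the curved maximum $\xi_{\psi_\lambda}$ with $\psi_\lambda=\varphi-\varphi_\lambda=(1-\lambda^{-\beta})\varphi$, and it is precisely the growth of $\psi_\lambda(t)\sim|t|^\alpha$ that makes $\sup_T\bbE_T[\xi_{\psi_\lambda}(Y)\mid 0,\tfrac12]<\infty$ hold uniformly in $T$ (this is \eqref{eq:key-phi-3}, proved via the union bound you sketch). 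In short: the gap between curve and floor is where the $T$-uniformity comes from, and your reduction throws that gap away.
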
 
Our proof of Theorem~\ref{thm:max-control} comprises several steps. The first one is a reduction to the 
key fact  \eqref{eq:key-phi} below,  about single polymers above concave floors.

\subsection{Reduction to a statement about single polymers above concave floors} 
The functional $h\mapsto \xi^T_{\varphi} (h)$ is increasing, and we can take advantage of 
\eqref{eq:FKG-reduction}: 
\be{eq:OneStepBound} 
\bbE_{n+1,  T}\left[  \xi^T_{\varphi} ( X_1 )\, \big| a, \lambda \right]  \leq 
\bbE_{n, \lambda^{2/3} T }
\left[ 
\bbE_T \left[   \xi^T_{\varphi} (Y ) \big|\, \frac{X_1 ( \lambda^{2/3}\cdot )}{\lambda^{1/3}} , a \right]
\big| a, \lambda\right] .
\ee
Set
\be{eq:varphi}
 \varphi_\lambda (t ) = \frac{1}{\lambda^{1/3}} \varphi\lb \lambda^{2/3} t \rb = 
 \frac{1}{\lambda^{\frac{1}{3} (1- 2\alpha )}} 
 \varphi (t) =: \frac{1}{\lambda^\beta} \varphi (t ).
\ee 
By the definition of $\xi^T_{\varphi}$, 
\be{eq:X1-bound}
 \frac{1}{\lambda^{1/3}} X_1 ( \lambda^{2/3} t  ) \leq
 \frac{1}{\lambda^{1/3}}\lb  \xi^{T\lambda^{2/3}}_{\varphi} (X_1 ) + \varphi  (\lambda^{2/3} t)\rb = 
 \frac{1}{\lambda^{1/3}} \xi^{T\lambda^{2/3}}_{\varphi} (X_1 ) + \varphi_{\lambda } (t) , 
\ee
Hence, the stochastic domination \eqref{eq:FKG-1} implies 
\begin{align}\label{eq:FKG-sum-bound} 
&\bbE_T \left[   \xi^T_{\varphi} (Y ) \big|\, \frac{X_1 ( \lambda^{2/3}\cdot )}{\lambda^{1/3}} , a \right] 
\nonumber\\&\qquad \leq \bbE_T \left[  \xi^T_{\varphi} (Y  )  \big|\,\frac{\xi^{T\lambda^{2/3}}_{\varphi} (X_1 )}{\lambda^{1/3}} + \varphi_{\lambda }, a \right] = 
\frac{\xi^{T\lambda^{2/3}}_{\varphi} (X_1 )}{\lambda^{1/3}} + \bbE_T \left[   \xi^T_{\varphi} (Y ) \big|\, \varphi_{\lambda }, a \right]. 
\end{align}
In the last equality above we  
relied on the linearity of area tilts, see
the observation at the end of Section \ref{subsec:recursion}.
 Going back to \eqref{eq:OneStepBound} we conclude: 
\be{eq:OneStepBound-1}
\bbE_{n+1,  T}\left[  \xi^T_{\varphi} ( X_1 )\, \big| a, \lambda \right]
 \leq  
\frac{1}{\lambda^{1/3}} \bbE_{n, T \lambda^{2/3}}\left[ \xi^{T\lambda^{2/3}}_{\varphi} ( X_1 )\, \big| a, \lambda\right]  + 
\bbE_T \left[   \xi^T_{\varphi} (Y ) \big| \varphi_\lambda ,a \right] .
\ee
\begin{figure}[h]
\begin{overpic}[scale=0.6]{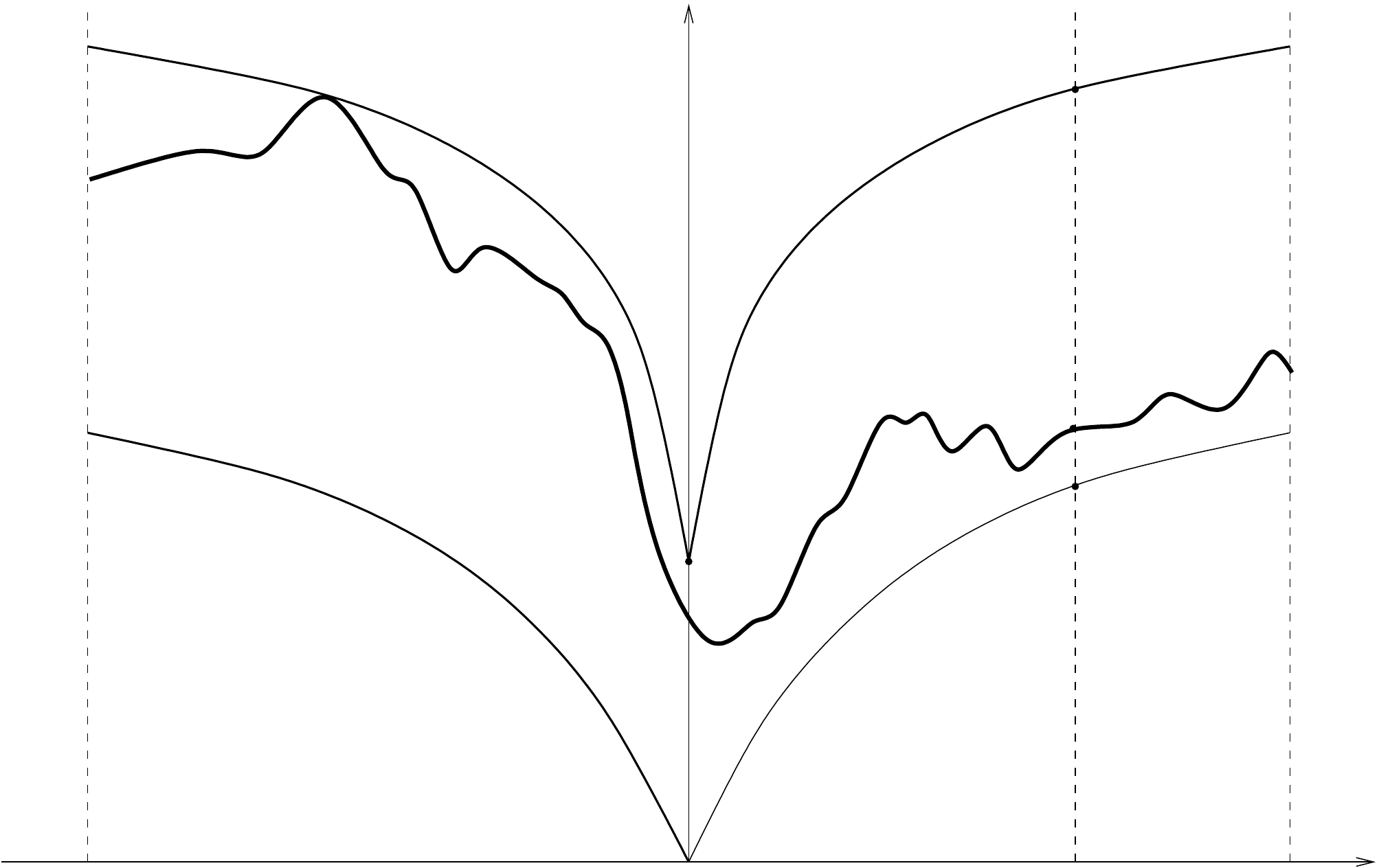}
\put(50.5,21){\scalebox{0.8}{$\xi^T_\varphi(Y)$}}
\put(62,57.5){\scalebox{.8}{$\xi^T_\varphi(Y)+\varphi(t)\,$}}
\put(72,34){\scalebox{0.8}{$Y(t)\,$}}
\put(78.8,25){\scalebox{0.9}{$\varphi_\lambda(t)$}}
\put(77.5,-3){\scalebox{1}{$t$}}
\put(92.5,-3){\scalebox{1}{$T$}}
\put(2.4,-3){\scalebox{1}{$-T$}}
\end{overpic}
\label{fig:b}
\caption{ Path $Y$ and $\xi^T_{\varphi} (Y )$ under $\bbE_T  \left[    \cdot  \big|\, \varphi_\lambda ,a\right]$.  }\vspace{-0.25cm}
\end{figure}

Consider 
\[
m_{n} =  m_n (a , \lambda ) \df \sup_{T} \bbE_{n, T} \left[ \xi^T_{\varphi} ( X_1 )| a, \lambda \right]. 
\]
If $m_1 <\infty$, then 
\eqref{eq:OneStepBound-1} implies that 
\[
 m_{n+1} \leq \frac{1}{\lambda^{1/3}} m_n + \sup_T \bbE_T \left[   \xi^T_{\varphi} (Y ) \big|\, \varphi_\lambda ,a \right] .
\]
Hence \eqref{eq:max-control} follows as soon as we shall check (see Figure \ref{fig:b})  that 
\be{eq:key-phi} 
m_1 (a, \lambda )  \leq \sup_T 
\bbE_T \left[   \xi^T_{\varphi} (Y ) \big|\, \varphi_\lambda ,a \right] <\infty .
\ee
Since $\xi^T_\varphi(\cdot) $ is monotone increasing, the first inequality in \eqref{eq:key-phi} follows by 
stochastic domination 
\eqref{eq:FKG-1}.  
The key point is to prove the second 
uniform bound \eqref{eq:key-phi}. 

In the sequel we shall assume that $a=1$ and, accordingly, 
shall drop it from all the notation. For instance, 
$\bbE_T \left[   \cdot  \big|\, \varphi_\lambda ,a \right]$ becomes 
$\bbE_T \left[   \cdot  \big|\, \varphi_\lambda  \right]$, and the corresponding partition 
function is recorded as ${\mathcal Z}_T(\varphi_\lambda )$. Moreover, we drop the superscript $T$ and write
$\xi_{\varphi} $ for $\xi^T_{\varphi} $. 

\subsection{Straightening of the boundary and Girsanov transform} 
The idea to use Girsanov's transform appeared in \cite{pascalzeitouni}. 
The floor should be smooth, and the singularity of $\varphi_\lambda$ at zero 
is a nuisance. 
However, by \eqref{eq:FKG-1} the bound 
\be{eq:key-phi-1} 
\sup_T 
\bbE_T \left[   \xi_{\varphi} (Y ) \big|\, h \right] <\infty .
\ee
with any $h\succ\varphi_\lambda$ implies \eqref{eq:key-phi}. 
We shall take a smooth symmetric $h= h_\lambda \succ \varphi_\lambda$ 
in such a way that: 
\be{eq:h-choice} 
\text{{$\varphi - h_\lambda$ is monotone on $\bbR_+$, }$h_\lambda = \varphi_\lambda$ outside some $[-T_0 , T_0]$ and $\max_t  h^{\prime\prime}_\lambda  (t ) \leq \frac{1}{2}$}. 
\ee
Recall that 
\begin{equation}
\label{eq:E-B-decomp}
 \begin{split} 
  \bbP_T  \left[  \cdot \big|\, h \right]
&= \frac{1}{{\mathcal Z}_T(h  )} 
\int_{h (-T )}^\infty
\int_{h (T )}^\infty 
{\mathbf B}_T^{x, y} 
\lb \, \cdot\, ;{\rm e}^{-\calA_T (X )}\1_{ h \prec X}\rb \dd y\dd x \\ 
&= 
\frac{1}{{\mathcal Z}_T(h  )}
\int_{h (-T)}^\infty 
{\mathbf P}_T^{x} 
\lb \, \cdot\, ;{\rm e}^{-\calA_T (X )}\1_{h \prec X}\rb \dd x , 
 \end{split}
\end{equation}
where ${\mathbf P}_T^{x}$ is the law of Brownian motion $X$ on $[-T,T]$ which starts at $x$ at time $-T$. 

We are going to derive a representation of 
${\mathcal Z}_T(h  )$ and, accordingly, of $\bbP_T\left[\cdot |h\right]$ in terms of polymers over flat 
wall, but with different tilts and boundary conditions. 
It, therefore, makes sense to  stress full  names for boundary conditions, 
floors and tilts. So, according to the notation introduced in 
\blue{\eqref{eq:mu-meas-bc}}, 
we are going to derive a representation of the quantities
${\mathcal Z}_T^{0, 0} (h, 1  )$  and $\bbP_T^{0,0}\left[\cdot |h , 1\right]$ corresponding to empty boundary conditions. 

Define $U (t ) = Y (t ) - h (t )$ and $u = x - h (-T )$. 
Thus $U$ satisfies the SDE  
\[ 
\dd U (t ) = \dd Y (t ) -
h^\prime (t )\dd t, \quad U (-T) = u.
\]
By Girsanov (used in the second equality below), 
\be{eq:Girsanov} 
\begin{split} 
 &{\rm e}^{\calA_{T} ( h  ) }
 {\mathbf P}_{T}^x \lb {\rm e}^{-\calA_{T} (X )}\1_{X\succ h }  \rb  = 
{\mathbf P}_{T}^u \lb {\rm e}^{-\calA_{T} (U )}\1_{\Omega_{+}^{T}} (U) \rb
\\
&= 
{\mathbf P}_{T}^u \lb 
{\rm e}^{-\int_{-T}^T h^\prime (t )\dd X (t ) - 
\frac{1}{2} \int_{-T}^T \lb h^\prime (t )\rb^2 \dd t - 
\calA_{T} ( X) } \1_{\Omega^{+}_{T}} (X ) \rb . 
\end{split} 
\ee
In the last line above $X$ is a ${\mathbf P}_{T}^u $-Brownian motion.
Under ${\mathbf P}_{T}^u$, 
\[ 
 \int_{-T}^T h^\prime (t )\dd X (t ) = 
 X h^\prime\Big|_{-T}^T - \int_{-T}^T X (t) h^{\prime\prime} (t )
 \dd t .
\]
Putting things together we conclude: Set 
\be{eq:nu-T}
\nu_T (u ) = h^\prime (T ) u = -h^\prime (-T ) u .
\ee
Then,  
\be{eq:Papprox} 
{\rm e}^{\calA_{T} \lb h + \frac{1}{2}(h^{\prime})^2 \rb  }
{\mathcal Z}_T^{0, 0} (h, 1  ) = 
{\mathcal Z}_T^{\nu_T, \nu_T} (0, 1 - h^{\prime\prime} ) .
%
\ee
A completely similar computation implies that the distribution of 
$Y$ under $\bbP_T^{0,0}[\cdot | h, 1]$ can be represented as the distribution  of $X + h = X +h_\lambda$ where $X$ has distribution 
$\bbP_T^{\nu_T,\nu_T}[\cdot | 0, 1 - h^{\prime\prime}]$. Since for $Y = X+h_\lambda$, 
\be{eq:xi-phi-h} 
\xi_{\varphi} (Y ) = \inf\lbr \xi\geq 0~:~ Y\prec \xi + \varphi\rbr = \xi_{\varphi - h_\lambda} (X ) \df \xi_{\psi_\lambda} (X ), 
\ee
we conclude that one can rewrite the expression in 
our 
target \eqref{eq:key-phi-1} as $$\sup_T 
\bbE_T^{\nu_T , \nu_T} \left[   \xi_{\psi_\lambda} (Y ) \big| 0, 1-h^{\prime\prime}  \right].$$ 
Since, by construction, $1-h^{\prime\prime} \geq \frac{1}{2}$, the stochastic domination \eqref{eq:FKG-1} 
enables a  reduction to: 
\be{eq:key-phi-2} 
\sup_T 
\bbE_T^{\nu_T , \nu_T} \left[   \xi_{\psi_\lambda} (Y ) \big| 0, \frac{1}{2}  \right] <\infty .
\ee
Furthermore, since by construction, $h^\prime (T ) > 0$ for all $T$ large enough, 
the second part of Lemma~\ref{lem:SD} implies that \eqref{eq:key-phi-2} will follow from
\be{eq:key-phi-3} 
\sup_T 
\bbE_T \left[   \xi_{\psi_\lambda} (Y ) \big| 0, \frac{1}{2}  \right] <\infty,
\ee
which corresponds to empty boundary conditions.
For the rest we shall focus on proving \eqref{eq:key-phi-3}.
\noindent 
\subsection{Proof of \eqref{eq:key-phi-3}} 
We start with some useful estimates for the partition functions $$Z_{0,t}^{x,y} :=Z_{0,t}^{x,y}(0,\infty,1/2).$$ 
Here and for the rest of this proof, with slight abuse of notation we adopt the convention that if $a$ is omitted from the notation then it corresponds to the case $a=\frac12$. 

First of all note, that for any $a>0$ the heat  kernel $Z_{0,t}^{x , y}( 0, \infty , a )$ has the following expansion: Let 
$\kappa_0^a , \kappa_1^a, \dots$ be the normalized (Dirichlet) eigenfunctions of $\frac{\dd^2}{2\dd x^2} - ax$ on 
$\bbL_2 \lb \bbR_+ \rb$, and let $0 > -\lambda_0 >-\lambda_1 >\dots$ be the corresponding
eigenvalues. Of course $\lambda_\ell = \frac{a}{b}\omega_\ell$ and $\kappa_\ell^a \propto \mathrm{Ai}\lb bx - \omega_\ell\rb$, 
where  $0 > -\omega_0 >  -\omega_1 >  \cdots $ 
are  zeroes of Airy 
function $\mathrm{Ai}$, and $b = \sqrt[3]{2a}$. Then, 
see e.g. Problem~1 in Chapter~9 of \cite{CodLev}, 
$\lbr \kappa_\ell^a\rbr$ is a complete orthonormal system, and
  by Riesz-Fisher and the elementary spectral theory the (Dirichlet) heat kernel 
\begin{equation}
\label{part_func_0}
Z_{0,t}^{x,y} 
=\sum_{m=0}^\infty e^{-\lambda_m t}\kappa_m(x)\kappa_m(y), 
\end{equation}
where we used the shortcut $\kappa_\ell = \kappa_{\ell}^{a}$ for $a=1/2$. 
The eigenfunctions $\kappa_m(x)$ are uniformly bounded. Furthermore, it is known that
zeros of the Airy function decrease relatively fast: $\omega_k\sim ck^{2/3}$ as $k\to\infty$.
These facts allow one to conclude that, uniformly in $t\ge t_0>0$,
\begin{equation}
\label{part_func_1}
\max_{x,y}Z_{0,t}^{x,y}\le \max_m\|\kappa_m\|_\infty^2\sum_{m=0}^\infty e^{-\lambda_m t}
\le C(t_0)e^{-\lambda_0 t}.
\end{equation}
Note that if the path $X(s)$ starting at $x$ does not go below $x/2$ then the corresponding
area is greater than $xt/2$. Therefore,
$$
\int_0^\infty Z_{0,t}^{x,y}dy \le e^{-xt/4}+\mathbf{P}_{0,t}^x\left(\min_{s\le t}X(s)<x/2\right).
$$
Using standard bound for the tail of the normal distribution, we conclude that there exist 
$t_0>0$ and 
$\gamma=\gamma(t_0) >0$ such that 
\begin{equation}
\label{part_func_2}
\int_0^\infty Z_{0,t_0}^{x,y}dy \le e^{-\gamma x} 
\end{equation}
for any $x >0$.
Combining \eqref{part_func_1} and \eqref{part_func_2}, we obtain
\begin{align}
\label{part_func_3}
\nonumber
Z_{0,t}^{x,y}
&=\int_0^\infty\int_0^\infty Z_{0,t_0}^{x,u}Z_{0,t-2t_0}^{u,v}Z_{0,t_0}^{v,y}dudv\\
\nonumber
&\le C(t_0)e^{-\lambda_0(t-2t_0)}\int_0^\infty\int_0^\infty Z_{0,t_0}^{x,u}Z_{0,t_0}^{v,y}dudv\\
&\le C_1(t_0)e^{-\lambda_0 t}e^{-\gamma(x+y)},\quad t\ge 3t_0.
\end{align}
It follows from the representation \eqref{part_func_0} that
$$
\lim_{t\to\infty}e^{\lambda_0t}Z_{0,t}^{x,y}=\kappa_0(x)\kappa_0(y)
$$
uniformly on compact subsets of $(0,\infty)^2$. Combining this with \eqref{part_func_3}
one can easily obtain
\begin{equation}
\label{part_func_4}
\lim_{T\to\infty}e^{2\lambda_0T}\calZ_T=\left(\int_0^\infty\kappa_0(x)dx\right)^2.
\end{equation}
We now derive an upper bound for the tail of the random variable $\xi_{\psi}$, where  $\psi=\psi_\lambda=\varphi-h_\lambda$. 
 {By \eqref{eq:h-choice} and \eqref{eq:varphi}  $\psi$ is a  symmetric function, it is monotone on $\bbR_+$, and it equals to $\frac{\lambda^\beta - 1}{\lambda^{\beta}   }\varphi $
 	outside $[-T_0 , T_0]$.}
 
Due to the symmetry of the function $\psi$,
$$
\bbP_T\left(\xi_{\psi}(Y)>r\right)
\le 2\bbP_T\left(Y(t)>\psi(t)+r\text{ for some }t\in[0,T]\right).
$$
There is no loss of generality to assume that $T\in \bbN$. 

Splitting $[0,T]$ into intervals of unit length and using the monotonicity of $\psi$,
we get
\begin{equation}
\label{tail_1}
\bbP_T\left(\xi_{\psi}(Y)>r\right)
\le 2 \sum_{k=0}^{T-1}\bbP_T\left(\max_{t\in[k,k+1]}Y(t)>\psi(k)+r\right).
\end{equation}
For every $k<T-1$ one has 
\begin{equation}
\label{tail_2}
\bbP_T\left(\max_{t\in[k,k+1]}Y(t)>\psi(k)+r\right)
=\frac{1}{\calZ_T}\int_0^\infty\int_0^\infty \calZ_{0,T+k}^{0,\delta_x}Q^{x,y}_k(r)\calZ_{0,T-k-1}^{\delta_y,0}dxdy,
\end{equation}
where
$$
Q^{x,y}_k(r)=\mathbf{B}^{x,y}_{k,k+1}\left(e^{-\frac{1}{2}\calA_{k,k+1}(Y)};\max_{t\in[k,k+1]}Y(t)>\psi(k)+r ; 
\1_{\Omega^+} (Y)
\right).
$$
It is immediate from \eqref{part_func_3} that
\begin{equation}
\label{tail_2a}
\calZ_{0,T+k}^{0,\delta_x}\le Ce^{-\lambda_0(T+k)}e^{-\gamma x}
\quad\text{and}\quad
\calZ_{0,T-k-1}^{\delta_y,0}\le Ce^{-\lambda_0(T-k-1)}e^{-\gamma y}.
\end{equation}
Applying these estimates and \eqref{part_func_4} to the corresponding terms in \eqref{tail_2},
we obtain
\begin{equation}
\label{tail_3}
\bbP_T\left(\max_{t\in[k,k+1]}Y(t)>\psi(k)+r\right)
\le C\int_0^\infty\int_0^\infty Q^{x,y}_k(r)e^{-\gamma(x+y)}dxdy,\ \;\;k<T-1.
\end{equation}
Set $\sfg (t ) = (2\pi)^{-\frac12}{\rm e}^{-t^2/2}$ and recall  that $\mathbf{B}^{x,y}_{0,1}(1) = \sfg (y-x )$.
By symmetry, we may assume without loss of generality that $x\leq y$. By the reflection principle for the Brownian bridge,
\begin{align*}
Q^{x,y}_k(r)&\leq \mathbf{B}^{x,y}_{k,k+1}\left(\max_{t\in[k,k+1]}Y(t)>\psi(k)+r\right)
\\&=
\left\{
\begin{array}{ll}
\sfg (y-x ),\ &y>\psi(k)+r,\\
\sfg (y- 2\psi(k)-2r+ x ),\ &y\le \psi(k)+r.
\end{array}
\right.
\end{align*}
Therefore,
\begin{align*}
\int_0^\infty Q^{x,y}_k(r)\dd r&\le \int_0^{(y-\psi(k))^+} 
\sfg (y-x )
\dd r
+\int_{(y-\psi(k))^+}^\infty
\sfg (y- 2\psi(k)-2r+ x )
\dd r\\
&=(y-\psi(k))^+
\sfg (y-x )
+\frac{1}{2}\int_{2\psi(k)+2(y-\psi(k))^+-x-y}^\infty
\sfg (z) 
\dd z\\
&\le (y-\psi(k))^++\frac{1}{2}{\rm 1}\{
y>\psi(k)/2\}+C\,e^{-\psi^2(k)/2}.
\end{align*}
Combining this with \eqref{tail_3}, we conclude that
$$
\int_0^\infty\bbP_T\left(\max_{t\in[k,k+1]}Y(t)>\psi(k)+r\right)dr
\le Ce^{-\gamma\psi(k)/2},\ \;\;\;k<T-1.
$$
Since $\psi(x)$ grows sufficiently fast, we conclude that, uniformly in $T$,
\begin{equation}
\label{tail_4}
\sum_{k=0}^{T-2}\int_0^\infty\bbP_T\left(\max_{t\in[k,k+1]}Y(t)>\psi(k)+r\right)dr\le C.
\end{equation}
For $k=T-1$ one has
$$
\bbP_T\left(\max_{t\in[T-1,T]}Y(t)>\psi(T-1)+r\right)
=\frac{1}{\calZ_T}\int_0^\infty
\int_0^\infty \calZ_{0,2T-1}^{0,\delta_x}Q^{x,y}_{T-1}(r)dxdy.
$$
Using \eqref{part_func_4} and \eqref{tail_2a}, we get
\begin{equation}
\label{tail_5}
\bbP_T\left(\max_{t\in[T-1,T]}Y(t)>\psi(T-1)+r\right)
\le C \int_0^\infty\int_0^\infty e^{-\gamma x}Q^{x,y}_{T-1}(r)dxdy.
\end{equation}
We infer from the definition of $Q^{x,y}_{T-1}(r)$ that
$$
\int_0^\infty Q^{x,y}_{T-1}(r)dy
\le\mathbf{P}^x_{T-1,T}\left(\max_{t\in[T-1,T]}Y(t)>\psi(T-1)+r\right).
$$
Integarting now over $r$, we obtain
\begin{align*}
&\int_0^\infty\int_0^\infty Q^{x,y}_{T-1}(r)dydr
\le\int_0^\infty\mathbf{P}^0_{0,1}\left(\max_{t\in[0,1]}Y(t)>\psi(T-1)-x+r\right)dr\\
&\hspace{2cm}\le (x-\psi(T-1))^++2\int_{(\psi(T-1)-x)^+}^\infty\mathbf{P}^0_{0,1}\left(Y(1)>r\right)dr\\
&\hspace{2cm}\le (x-\psi(T-1))^++\sfg(\psi(T-1)-x)^+).
\end{align*}
Integrating \eqref{tail_5} over $r$ and applying the latter bound, we arrive at 
\begin{equation}
\label{tail_6}
\int_0^\infty \bbP_T\left(\max_{t\in[T-1,T]}Y(t)>\psi(T-1)+r\right)dr
\le C e^{-\gamma\psi(T-1)/2}.
\end{equation}
It remains to note that \eqref{eq:key-phi-3} is immediate from \eqref{tail_1}, \eqref{tail_4} and \eqref{tail_6}.

\appendix
\section{Polymer measures are well defined} 
\label{sub:Exists} 
We shall describe conditions under which 
probability measures in \eqref{eq:mu-meas} are well defined, or, equivalently, 
under which the corresponding 
partition functions 
$Z_{n, T}^{\ux , \uy}( h_- , h_+, \urho )$ are finite. 
Define the minimal tilt on the interval $[-T , T]$ as 
\be{eq:def-a} 
a = a_T = \min_k \min_{t\in [-T,T]} \rho_k (t ) >0 .
\ee
and let $\underline{a}$ be the corresponding tuple of constant functions. Evidently, 
\[ 
Z_{n, T}^{\ux , \uy}( h_- , h_+, \urho ) \leq Z_{n, T}^{\ux , \uy}( 0 , \infty , \ua )
\]
Define $\uu  =\lbr x_1, \dots , x_{n-1}\rbr$ and $\uv = \lbr y_1, \dots , y_{n-1}\rbr$ and assume
that $Z_{n-1, T}^{\uu , \uv}( 0, \infty , \ua )<\infty$. Then,
\be{eq:Z-reduction}
\begin{split}
	Z_{n, T}^{\ux , \uy}( 0 , \infty , \ua ) &=  Z_{n-1, T}^{\uu , \uv}( 0, \infty , \ua )\cdot \bbE_{n-1, T}^{\uu , \uv}\left[ 
	{\mathbf B}_T^{x_n, y_n}\lb {\rm e}^{-a\calA_T (Y )}\1_{0\prec Y \prec X_{n-1}}\rb~\big|\, 0 , \infty , \ua\right]  \\
	&\leq Z_{n-1, T}^{\uu , \uv}( 0, \infty , \ua )Z_{T}^{x_n , y_n}( 0, \infty , a )\leq\dots 
	\leq \prod_1^n Z_{T}^{x_k , y_k}( 0, \infty , a ),  
\end{split}
\ee
where the first inequality above folows by removing the constraint $Y\prec X_{n-1}$. 

Consequently, general partition functions in  \eqref{eq:mu-meas-bc} may be bounded above as
\be{eq:pb-nu-bound} 
\calZ_{n, T}^{\unu , \ueta}( h_- , h_+, \urho ) \leq \prod_{k=1}^n \lb \int_0^\infty\int_0^\infty {\rm e}^{-\nu_k (x )}
Z_{T}^{x , y}( 0, \infty , a ){\rm e}^{-\eta_k (y )}\dd x\dd y\rb . 
\ee
As it was already briefly explained in the paragraph preceding \eqref{part_func_0}, 
 the kernel $Z_{T}^{x , y}( 0, \infty , a )$ has the following expansion: \be{eq:Zt-expAiry} 
Z_{T}^{x , y}( 0, \infty , a ) = \sum_0^\infty {\rm e}^{-2\lambda_\ell T} \kappa_\ell^a (x )\kappa_\ell^a (y ) .
\ee
We, therefore, conclude: 
\begin{lemma} 
	\label{lem:Exists} Consider \eqref{eq:def-a}. Let 
	$\kappa_0 , \kappa_1, \dots$ be the normalized eigenfunctions of $\frac{\dd^2}{2\dd x^2} - ax$ on 
	$\bbL_2 \lb \bbR_+ \rb$, and let $0 > -\lambda_0   >-\lambda_1   >\dots$ be the corresponding
	eigenvalues.
	Assume that 
	\[
	\sum_{\ell=0}^\infty {\rm e}^{-2\lambda_\ell T} \lb \int   {\rm e}^{-\nu_k (x )}\kappa_\ell (x )\dd x\rb 
	\lb \int   {\rm e}^{-\eta_k (y )}\kappa_\ell (y )\dd y\rb < \infty  
	\]
	is absolutely convergent for every $k=1, \dots, n$. 
	Then, $\bbP_{n, T}^{\unu , \ueta}\left[\, \cdot\, \big|  h_- , h_+, \urho \right]$
	in \eqref{eq:mu-meas-bc} is well defined. In particular, it is well defined whenever ${\rm e}^{-\nu_k}$-s and ${\rm e}^{-\eta_k}$-s 
	belong to $\bbL_2 (\bbR_+ )$, and, since $\lim_{y\to -\infty} \int_y^\infty \mathrm{Ai} ( x )\dd x$ exists and finite, the
	measures 
	$\bbP_{n, T}\left[\, \cdot\, \big|  h_- , h_+, \urho \right]$
	are  well defined also in the case of empty 
	boundary conditions $\unu, \ueta = \underline{0}$.
\end{lemma}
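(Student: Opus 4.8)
The plan is to observe that all the analytic work has essentially been done already in \eqref{eq:pb-nu-bound} and in the spectral expansion \eqref{eq:Zt-expAiry}, so that proving the lemma amounts to checking convergence of an explicit series. First I would record that \eqref{eq:mu-meas-bc} defines a bona fide probability measure exactly when the total mass $\calZ_{n, T}^{\unu , \ueta}( h_- , h_+, \urho )$ is finite and strictly positive. Strict positivity is unconditional as long as $\Omega_{n, T}^{\uh}$ carries positive mass under the reference Brownian bridge ensemble — automatic in all cases of interest, where the floor is $0$ and the ceiling is $\infty$ — since the integrand in \eqref{eq:mu-meas-bc} is then strictly positive on an open set of path configurations. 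So the content of the lemma is the upper bound $\calZ_{n, T}^{\unu , \ueta}( h_- , h_+, \urho )<\infty$, and by \eqref{eq:pb-nu-bound} (which already reduces the general tilt $\urho$ to the constant minimal tilt $a$ of \eqref{eq:def-a} and factorizes over $k$) it suffices to show that each factor
\[
\int_0^\infty\int_0^\infty e^{-\nu_k(x)}\,Z_{T}^{x , y}( 0, \infty , a )\,e^{-\eta_k(y)}\,\dd x\,\dd y
\]
is finite.

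Next I would insert the Airy expansion \eqref{eq:Zt-expAiry}, $Z_{T}^{x , y}( 0, \infty , a )=\sum_{\ell\ge 0}e^{-2\lambda_\ell T}\kappa_\ell^a(x)\kappa_\ell^a(y)$, and interchange the $\ell$-sum with the $\dd x\,\dd y$ integration. Since $Z_{T}^{x , y}( 0, \infty , a )\ge 0$, bounding it by $\sum_\ell e^{-2\lambda_\ell T}|\kappa_\ell^a(x)||\kappa_\ell^a(y)|$ and applying Tonelli reduces the matter to absolute convergence of $\sum_\ell e^{-2\lambda_\ell T}\bigl(\int e^{-\nu_k}|\kappa_\ell^a|\bigr)\bigl(\int e^{-\eta_k}|\kappa_\ell^a|\bigr)$; alternatively, one recovers the exact value $\sum_\ell e^{-2\lambda_\ell T}\bigl(\int e^{-\nu_k}\kappa_\ell^a\bigr)\bigl(\int e^{-\eta_k}\kappa_\ell^a\bigr)$ after a routine truncation of the $\dd x\,\dd y$ domain to a square together with monotone convergence. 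Either way, finiteness is precisely the standing hypothesis, and this disposes of the general statement.

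For the two specializations I would argue as follows. When $e^{-\nu_k},e^{-\eta_k}\in\bbL_2(\bbR_+)$, Cauchy--Schwarz and the normalization $\|\kappa_\ell^a\|_{\bbL_2(\bbR_+)}=1$ give $|\int e^{-\nu_k}\kappa_\ell^a|\le\|e^{-\nu_k}\|_{\bbL_2(\bbR_+)}$ and similarly for $\eta_k$, so the $\ell$-series is dominated by $\|e^{-\nu_k}\|_{\bbL_2(\bbR_+)}\|e^{-\eta_k}\|_{\bbL_2(\bbR_+)}\sum_\ell e^{-2\lambda_\ell T}$; since $\lambda_\ell=\tfrac{a}{b}\omega_\ell$ with $\omega_\ell\sim c\ell^{2/3}$, this series converges super-exponentially fast. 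For empty boundary conditions $\unu=\ueta=\underline{0}$ the functions $e^{-\nu_k}\equiv e^{-\eta_k}\equiv 1$ lie neither in $\bbL_1(\bbR_+)$ nor in $\bbL_2(\bbR_+)$, so one must use the oscillation of the Airy function instead of a crude size bound: writing $\kappa_\ell^a\propto\mathrm{Ai}(bx-\omega_\ell)$ with $b=\sqrt[3]{2a}$, one has $\int_0^\infty\kappa_\ell^a(x)\,\dd x=\tfrac{c_\ell}{b}\int_{-\omega_\ell}^\infty\mathrm{Ai}(u)\,\dd u$, and this is bounded uniformly in $\ell$: the tails $\int_{-\omega_\ell}^\infty\mathrm{Ai}$ converge to the finite limit $\lim_{y\to-\infty}\int_y^\infty\mathrm{Ai}(x)\,\dd x$ as $\omega_\ell\to\infty$, while the normalization constants $c_\ell$ are bounded above and below away from zero because the $\kappa_\ell^a$ are uniformly bounded in sup-norm and $\sup_{x\ge 0}|\mathrm{Ai}(bx-\omega_\ell)|$ is itself two-sidedly controlled (for large $\ell$ it converges to $\|\mathrm{Ai}\|_\infty$, the remaining finitely many $\ell$ being harmless). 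Hence $|\int_0^\infty\kappa_\ell^a|\le C$ uniformly in $\ell$, and after the truncation argument each factor is at most $C^2\sum_\ell e^{-2\lambda_\ell T}<\infty$, so the measures $\bbP_{n, T}[\cdot\,|\,h_-,h_+,\urho]$ with free endpoints are well defined.

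I expect this last case to be the only genuinely delicate point: extracting a uniform bound from the cancellation in $\mathrm{Ai}$ and from two-sided control of the eigenfunction normalizations is more than the bookkeeping required elsewhere, which reduces to combining \eqref{eq:pb-nu-bound}, \eqref{eq:Zt-expAiry} and the asymptotics $\omega_\ell\sim c\ell^{2/3}$ of the Airy zeros. The interchange of summation and integration is the other place where a little care is needed, but it is handled cleanly by the nonnegativity of the heat kernel plus Tonelli, or by a monotone-convergence truncation.
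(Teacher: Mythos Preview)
Your argument is correct and follows the paper's own route exactly: the paper states the lemma as the direct conclusion (``We, therefore, conclude:'') of \eqref{eq:pb-nu-bound} and \eqref{eq:Zt-expAiry}, and you have simply supplied the details the paper leaves implicit, including the Cauchy--Schwarz step for the $\bbL_2$ case and the boundedness of $\int_0^\infty\kappa_\ell^a$ for free endpoints.

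One small inaccuracy: the normalization constants $c_\ell$ are \emph{not} bounded below away from zero. In fact $c_\ell^{-2}=b^{-1}\int_{-\omega_\ell}^\infty\mathrm{Ai}(u)^2\,\dd u=b^{-1}\mathrm{Ai}'(-\omega_\ell)^2\sim c\,\omega_\ell^{1/2}$, so $c_\ell\sim c'\omega_\ell^{-1/4}\to 0$. Your stated justification only yields the upper bound on $c_\ell$ (uniform sup-norm bound on $\kappa_\ell^a$ divided by a lower bound on $\sup_{x\ge 0}|\mathrm{Ai}(bx-\omega_\ell)|$), and that upper bound is all you actually use, so the conclusion stands.
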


\section{Proof of the stochastic domination lemma} 
\label{sub:SDproof} 
\begin{proof}[Proof of Lemma \ref{lem:SD}]
As in the proof of Lemma~2.6 
in \cite{corwinhammond} we construct a coupling for discrete random walk ensembles via Markov chains and then obtain the desired result by appealing to the invariance principle. The presence of area tilts and boundary conditions makes our setting slightly different from that of \cite{corwinhammond}. For completeness we provide the details below. 

We start with the case of fixed boundary conditions \eqref{eq:FKG-2}.  
For each $N,n\in\bbN$, let $T_N=\lfloor TN\rfloor$, and consider vectors of integers $\ux_N=(x_{N,1},\dots,x_{N,n})$ and $\uy_N=(y_{N,1},\dots,y_{N,n})$, such that, as $N\to\infty$, $\frac1{\sqrt N}x_{N,i}=(1+o(1))x_i$, $\frac1{\sqrt N}y_{N,i}=(1+o(1))y_i$, for $i=1,\dots,n$.  Given the floor and ceiling functions $h_\pm$, consider height functions $h_{N,\pm}$ such that $\frac1{\sqrt N}h_{N,\pm}(k)= (1+o(1))h_{\pm}(k/N)$, uniformly in $k\in\{-T_N,\dots,T_N\}$. Let 
$\Omega(N,n)$  denote the set of vectors $\underline W=(W_1,\dots,W_n)$ where $W_i$
denotes a lattice path $\{W_i(k)\in\bbZ,\,k=-T_N,\dots,T_N\}$ satisfying $|W_i(k+1)-W(k)|=1$ for all $k\in\{-T_N,\dots,T_N-1\}$ and such that $0\leq W_{i+1}(k)<W_i(k)$ for all $i,k$.
Finally, let $\bbP_{N,n, T}^{\ux, \uy}\left[\cdot   | h_{-} , h_{+}, \urho\right]$ denote the probability measure 
on $\Omega(N,n)$ associated to the partition function
$$
\sum_{\underline W\in\Omega(N,n)} \1_{\underline W(-T_N)=\ux_N,\, \underline W(T_N)=\uy_N}\1_{W_1\leq h_{N,+}}\1_{ h_{N,-}\leq W_n}\,{\rm e}^{-\frac1{N^{3/2}}\sum_1^n \calA_{\rho_i,N} (W_i )},
$$
where $$\calA_{\rho_i,N} (W_i ) = \sum_{k=-T_N}^{T_N}\rho_i(k/N)W_i(k).$$
Next, define the rescaled paths $\underline{\hat W}_N(t) = \frac1{\sqrt N} \underline W(tN)$, $t\in[-T,T]$,
where the value of $\underline W$ at non-integer points is defined by linear interpolation. 
Call $\hat \bbP_{N,n, T}^{\ux, \uy}\left[\cdot   | h_{-} , h_{+}, \urho\right]$ the law of the continuous paths $\underline{\hat W}_N$ induced by $\bbP_{N,n, T}^{\ux, \uy}\left[\cdot   | h_{-} , h_{+}, \urho\right]$. Since for all $i$, the Riemann sum
$$
\frac1{N^{3/2}}\calA_{\rho_i,N} (W_i ) = \frac1N \sum_{k=-T_N}^{T_N}\rho_i(k/N)\hat W_{N,i}(k/N)
$$
approximates the integral $\int_{-T}^T\rho_i(t)\hat W_{N,i}(t) dt$, 
the invariance principle implies that for all fixed $n,T$, the probability measures $\hat \bbP_{N,n, T}^{\ux, \uy}\left[\cdot   | h_{-} , h_{+}, \urho\right]$ converge weakly as $N\to\infty$ to the probability measure $\bbP_{n, T}^{\ux, \uy}\left[\cdot   | h_- , h_+, \urho\right]$.

The same construction can be repeated for the measure $\bbP_{n, T}^{\uu , \uv}\left[\cdot  | g_- , g_+, \ukappa \right]$. It is not hard to check that, under the current assumptions, the sequences $\ux_N,\uy_N,\uu_N,\uv_N$ and $h_{N,\pm},g_{N,\pm}$ associated to the given boundary data can be chosen in such a way that, for all $N$ large enough:
\begin{itemize}
\item $h_{N,\pm}(k)\leq   g_{N,\pm}(k)$ for all $k=-T_N,\dots,T_N$;
\item for every $i$, $x_{N,i}\leq   u_{N,i}$, and $y_{N,i}\leq   v_{N,i}$; 
\item for every $i$, $x_{N,i}$ and $u_{N,i}$ are integers with the same parity, and the same applies to $y_{N,i},v_{N,i}$;
\item the set of $\underline W\in\Omega(N,n)$ satisfying the boundary constraints $$\underline W(-T_N)=\ux_N,\, \underline W(T_N)=\uy_N, \;\;W_1\leq h_{N,+}, \,h_{N,-}\leq W_n,$$ is not empty, and the same applies with $\uu_N,\uv_N$ and $g_{N,\pm}$.
\end{itemize}
Then, the desired statement 
$$\bbP_{n, T}^{\ux, \uy}\left[\cdot   | h_- , h_+, \urho\right]\fkg 
\bbP_{n, T}^{\uu , \uv}\left[\cdot  | g_- , g_+, \ukappa \right],$$
follows if, for all large enough $N$,  we can construct a coupling $(\underline W,\underline W')$ on $\Omega(N,n)\times \Omega(N,n)$ of the probability measures $\bbP_{N,n, T}^{\ux, \uy}\left[\cdot   | h_- , h_+, \urho\right]$ and $\bbP_{N,n, T}^{\uu , \uv}\left[\cdot  | g_- , g_+, \ukappa \right]$  such that with probability one  for each $i=1,\dots,n$, $k=-T_N,\dots,T_N$ one has $W_i(k)\leq W'_i(k)$. 

The coupling $(\underline W,\underline W')$ is defined as a limit of Markov chain couplings. 
We consider the heat bath chain for the discrete polymer ensemble. This is the discrete time Markov chain on $\Omega(N,n)$ such that at each time step a vertex $k\in\{-T_N+1,\dots,T_N-1\}$, an index $i\in\{1,\dots,n\}$, and a real number $U\in[0,1]$ are picked independently and uniformly at random; if $W_i(k-1)\neq W_i(k+1)$ then nothing happens; if   $W_i(k-1)= W_i(k+1)$, then 
$W_i(k)$ is replaced by $W_i(k-1) + 1$ if $U\leq p_{k,i}$ and by $W_i(k-1) - 1$ if $U> p_{k,i}$, where we use the notation
$$
p_{k,i}= \frac{{\rm e}^{-2\rho_i(k/N)N^{-3/2}}}{1+{\rm e}^{-2\rho_i(k/N)N^{-3/2}}};
$$
if the new polymer configuration $\underline W$ violates either of the constraints $\underline W\in\Omega(N,n)$, $W_1\leq h_{N,+}$, $h_{N,-}\leq W_n$, then the proposed update is rejected; otherwise the current configuration is updated accordingly.  The above defined Markov chain is reversible with respect to the measure $\bbP_{N,n, T}^{\ux, \uy}\left[\cdot   | h_- , h_+, \urho\right]$, and converges to it as time goes to infinity, for any valid initial condition. Now, suppose that $\underline W,\underline W'$ are two polymer configurations in $\Omega(N,n)$ such that $W_1\leq h_{N,+}$, $h_{N,-}\leq W_n$ and $W'_1\leq g_{N,+}$, $g_{N,-}\leq W'_n$, and suppose further that $W_i(k) \leq W_i'(k)$
at every $i,k$. A coupling of the single Markov chain step for this pair is obtained by repeating the above described updating procedure with the same choice of random numbers $k,i,U$
for both copies. Since $h_{N,\pm}\leq g_{N,\pm}$ and $\rho_i\geq \kappa_i$ it follows that the new polymer configurations must satisfy again  $W_i(k) \leq W_i'(k)$. Indeed, because of the parity assumption on the boundary heights, the first violation of this condition could only appear at a site $k$ such that $$W_i(k-1) = W_i(k+1)=W'_i(k-1) = W_i'(k+1),$$ and in this case the conditions $h_{N,\pm}\leq g_{N,\pm}$ and $\rho_i\geq \kappa_i$ guarantee that the order is preserved. 
Repeating this procedure at each time step yields a Markov chain coupling such  that if at time zero
one has $W_i(k) \leq W_i'(k)$ at every $i,k$, then this condition is preserved at all times. The initial polymer configurations can be chosen by taking $W$ as the minimal element of $\Omega(N,n)$ such that $\underline W(-T_N)=\ux_N,\, \underline W(T_N)=\uy_N$, $W_1\leq h_{N,+}, h_{N,-}\leq W_n$, and $W'$ as the maximal element of $\Omega(N,n)$ such that $\underline W(-T_N)=\uu_N,\, \underline W(T_N)=\uv_N$, $W_1\leq g_{N,+}, g_{N,-}\leq W_n$.   As pointed out above, these initial configurations are well defined. 
It follows that at time zero, and thus at all times,   $W_i(k) \leq W_i'(k)$ at every $i,k$. By taking time to infinity one obtains the desired coupling of $\bbP_{N,n, T}^{\ux, \uy}\left[\cdot   | h_- , h_+, \urho\right]$ and $\bbP_{N,n, T}^{\uu , \uv}\left[\cdot  | g_- , g_+, \ukappa \right]$. This ends the proof of \eqref{eq:FKG-2}.

To prove the  statement \eqref{eq:FKG-1}, we need to take into account  the boundary conditions encoded by the functions $\uxi,\uzeta,\unu,\ueta$. 
Let $\bbP_{N,n, T}^{\uxi , \uzeta}\left[\cdot   | h_{-} , h_{+}, \urho\right]$ denote the probability measure 
on $\Omega(N,n)$ associated to the partition function
\begin{align*}
&\sum_{\underline\ell,\underline r}\sum_{W\in\Omega(N,n)} \1_{\underline W(-T_N)=\uell,\, \underline W(T_N)=\underline r}\1_{W_1\leq h_{N,+}}\1_{ h_{N,-}\leq W_n}\,\times\\
&\qquad\times \, {\rm e}^{-\sum_1^n \xi_i(\ell_i/\sqrt N)}{\rm e}^{-\sum_1^n \zeta_i(r_i/\sqrt N)}\,{\rm e}^{-\frac1{N^{3/2}}\sum_1^n \calA_{\rho_i,N} (W_i )},
\end{align*}
where $\underline\ell,\underline r$ range over all vectors $(k_1,\dots,k_n)\in\bbZ^n$ such that $0\leq k_n<k_{n-1}<\cdots<k_1$,  and $h_{N,\pm}$ is such that $\frac1{\sqrt N}h_{N,\pm}(k)= (1+o(1))h_{\pm}(k/N)$, uniformly in $k\in\{-T_N,\dots,T_N\}$. As above we call $\hat \bbP_{N,n, T}^{\uxi, \uzeta}\left[\cdot   | h_{-} , h_{+}, \urho\right]$ the law induced on the rescaled continuous paths $\underline {\hat W}_N$. 
Then, approximating the sum over $\uell,\ur$ by integrals,  the invariance principle implies that for all fixed $n,T$, the probability measures $\hat \bbP_{N,n, T}^{\uxi, \uzeta}\left[\cdot   | h_{-} , h_{+}, \urho\right]$ converge weakly as $N\to\infty$ to the probability measure $\bbP_{n, T}^{\uxi, \uzeta}\left[\cdot   | h_- , h_+, \urho\right]$.
Therefore, the desired statement 
\begin{equation}\label{eq:stdome}
\bbP_{n, T}^{\uxi , \uzeta}\left[\cdot   | h_- , h_+, \urho\right]\fkg 
\bbP_{n, T}^{\unu , \ueta}\left[\cdot  | g_- , g_+, \ukappa \right],
\end{equation}
follows if, for all $N$ large enough, we can construct a coupling $(\underline W,\underline W')$ on $\Omega(N,n)\times \Omega(N,n)$ of the probability measures $\bbP_{N,n, T}^{\uxi, \uzeta}\left[\cdot   | h_- , h_+, \urho\right]$ and $\bbP_{N,n, T}^{\unu , \ueta}\left[\cdot  | g_- , g_+, \ukappa \right]$  such that with probability one for each $i=1,\dots,n$, and $k=-T_N,\dots,T_N$, one has $W_i(k)\leq W'_i(k)$.


The coupling $(\underline W,\underline W')$ is defined as before with the only difference that the random index $k$ is now picked uniformly in $\{-T_N,\dots,T_N\}$. If $k\notin\{-T_N,T_N\}$ then we repeat the previously described update rule. If $k=-T_N$, then the height $W_i(-T_N)$ is replaced by $ W_i(-T_N+1)+1$ if $U\leq \hat p_{-T_N,i} (W_i(-T_N+1) )$ and by $ W_i(-T_N+1)-1$ if $U> \hat p_{-T_N,i} (W_i(-T_N+1) )$, where 
$$
\hat p_{-T_N,i} (a ) := 
 \frac{{\rm e}^{-2\rho_i(-T_N/N)N^{-3/2}}\,{\rm e}^{-\nabla\xi_i(
		a
 		)}}{1+{\rm e}^{-2\rho_i(-T_N/N)N^{-3/2}}\,{\rm e}^{-\nabla\xi_i(
 		a 
 		)}},
$$ 
and we use the notation $$\nabla\xi_i(a) = \xi_i\left(\tfrac{a+1}{\sqrt N}\right) - \xi_i\left(\tfrac{a-1}{\sqrt N}\right).$$
Similarly, 
$W_i^{\prime}(-T_N)$ is replaced by $ W_i^{\prime}(-T_N+1)+1$ if $U\leq \hat {q}_{-T_N,i} \lb W_i^{\prime}(-T_N+1)\rb  $ and by $ W_i^{\prime}(-T_N+1)-1$ if $U> \hat {q}_{-T_N,i} \lb W_i^{\prime}(-T_N+1)\rb $, where 
$$
\hat {q}_{-T_N,i} (
a)  = \frac{{\rm e}^{-2\kappa_i(T_N/N)N^{-3/2}}\,{\rm e}^{-\nabla\nu_i(
		a)}}{1+{\rm e}^{-2\kappa_i(T_N/N)N^{-3/2}}\,{\rm e}^{-\nabla
		\nu_i(a) }},
$$ 
with 
$$\nabla\nu_i(a) = \nu_i\left(\tfrac{a+1}{\sqrt N}\right) - \nu_i\left(\tfrac{a-1}{\sqrt N}\right).$$

As before we may assume without loss of generality that the configurations $W_i,W_i'$ at time zero are such that $W_i(-T_N)$ and $W_i'(-T_N)$ have the same parity and that the same applies to $W_i(T_N)$ and $W_i'(T_N)$. Note also that the parity of these boundary values does not change with time. 
Thanks to this parity constraint, the first violation of the order $W_i\leq W'_i$  can occur at site $k=-T_N$ only if for some 
$i$ one has $W_i(-T_N+1)=W'_i(-T_N+1)$. Therefore, it is sufficient to show that for all $a\geq 0$:
\begin{align*}
&\nabla\xi_i(a) - \nabla\nu_i(a)\geq 0\,.
\end{align*}
The bound above follows immediately from the assumption $\xi_i' (x ) \geq \nu_i' (x )$, $x\geq 0$.
This implies that our coupling preserves the order at the boundary $-T_N$. The same argument applies to the boundary at  $T_N$. A repetition of the previous argument then concludes the proof of \eqref{eq:FKG-1}.
\end{proof}
\noindent{\bf Acknowledgment.} {P.C and D.I would like to thank Fabio Martinelli and Yvan Velenik for useful discussions at the initial stages of this project.}

\bibliographystyle{plain}

\bibliography{bib_tightness}
\end{document}